\DeclareMathAlphabet{\pazocal}{OMS}{zplm}{m}{n}
\title{Finiteness criteria for Gorenstein flat dimension and stability}
\author{Ilias Kaperonis and Dimitra-Dionysia Stergiopoulou}
\thanks{Research supported by the Hellenic Foundation for Research and Innovation (H.F.R.I.) under the ``1st Call
for H.F.R.I. Research Projects to support Faculty members and Researchers and the procurement of high-cost
research equipment grant”, project number 4226.}
\newtheorem{Lemma}{Lemma}[section]
\newtheorem{Proposition}[Lemma]{Proposition}
\newtheorem{Theorem}[Lemma]{Theorem}
\newtheorem{Corollary}[Lemma]{Corollary}
\newtheorem{Remark}[Lemma]{Remark}
\newtheorem{Definition}[Lemma]{Definition}
\begin{document}

\begin{abstract} Projectively coresolved Gorenstein flat modules were introduced recently by Saroch and Stovicek and were shown to be Gorenstein projective. While the relation between Gorenstein projective and Gorenstein flat modules is not well understood, the class of projectively coresolved Gorenstein flat modules is contained in the class of Gorenstein flat modules. This paper proves necessary and sufficient conditions for a module of finite Gorenstein flat dimension to be projectively coresolved Gorenstein flat, or of finite flat dimension. Stability results for the class of projectively coresolved Gorenstein flat modules are also established. 

\end{abstract}

\maketitle

\section{Introduction}
Auslander and Bridger \cite{AB} introduced the concept of $G$-dimension for commutative Noetherian rings. This was extended to modules over any ring $R$ through the notion of a Gorenstein projective module by Enochs and Jenda \cite{EJ,EJ2}, who also defined the notion of Gorenstein injective and Gorenstein flat modules. The relative homological dimensions based on these modules were defined in \cite{H1}, which is the standard reference for these notions. Later, Saroch and Stovicek \cite{S-S} introduced the notion of projectively coresolved Gorenstein flat modules (PGF modules, for short). Over a ring $R$, these modules are the syzygies of the acyclic complexes of projective modules that remain acyclic after applying the functor $I\otimes_R  \_\!\_$ for every injective module $I$, and hence they are Gorenstein flat. The class of PGF modules is contained in the class of Gorenstein projective modules, as shown in \cite[Theorem 4.4]{S-S}.

Maximal Cohen-Macaulay approximations were introduced and studied by Auslander and Buchweitz in \cite{AB2} for finitely generated modules over a commutative Gorenstein local ring. These approximations have been generalized in \cite[Lemma 2.17]{Ch-Fr-H} and \cite[Theorem 2.10]{H1}, where it is shown that for every ring $R$ and every $R$-module $M$ of finite Gorenstein projective dimension there exist approximation sequences of $R$-modules of the form
\begin{equation}\label{eq1}0 \rightarrow K \rightarrow G \rightarrow M \rightarrow  0,
\end{equation}
\begin{equation}\label{eq2}0 \rightarrow M \rightarrow A\rightarrow G' \rightarrow  0,
\end{equation}
where $G,G'$ are Gorenstein projective and $K,A$ have finite projective dimension. Emmanouil and Talelli \cite{Em-Ta} showed that exact sequences as above satisfy a weak form of uniqueness, as in the classical Schanuel’s lemma.

Analogous approximation sequences have been contructed in \cite[Theorem 4.11]{S-S} concerning Gorenstein flat, PGF and flat modules. More explicitly, for every ring $R$ and every Gorenstein flat $R$-module $N$ there exist short exact sequences of $R$-modules of the form 
\begin{equation}\label{eq3}
	0 \rightarrow F \rightarrow L \rightarrow N \rightarrow 0 ,
\end{equation}
\begin{equation}\label{eq4}
	0 \rightarrow N \rightarrow F' \rightarrow L' \rightarrow 0 ,
\end{equation}
where $L,L'$ are PGF and $F,F'$ are flat. 

Emmanouil \cite[Theorem 2.1]{Em} studied the class of modules of finite Gorenstein flat dimension and obtained approximation sequences of these modules by PGF modules and modules of finite flat dimension. More precisely, he generalized the sequences (\ref{eq2}), (\ref{eq3}) by constructing for every ring $R$ and every $R$-module $M$ of finite Gorenstein flat dimension short exact sequences of $R$-modules of the form
\begin{equation}\label{eq5}
 0 \rightarrow F \rightarrow L \rightarrow M \rightarrow 0 ,
\end{equation}
\begin{equation}\label{eq6}
 0 \rightarrow M \rightarrow F' \rightarrow L' \rightarrow 0 ,
\end{equation}
where $L,L'$ are PGF and $F,F'$ have finite flat dimension. We will show that an exact sequence of the form (\ref{eq5}) or (\ref{eq6}) satisfies also
a weak form of uniqueness, as in the classical Schanuel’s lemma.

In this paper we prove necessary and sufficient conditions for a module of finite Gorenstein flat dimension to be PGF, or of finite flat dimension. Furthermore, we establish stability results for the class of projectively coresolved Gorenstein flat modules. The content and results of this paper may be described in some detail as follows.

In Section 2, we establish notation, terminology and preliminary results that will be used in the sequel.
Using the exact sequence (\ref{eq5}), in Section 3 we will describe the precise sense in which $L$ may be regarded as an approximation of $M$ by a PGF module. In particular, we consider the stabilization $\mathfrak{FF}\textrm{-}R\textrm{-Mod}$ of the category of $R$-modules with respect to the class of modules of finite flat dimension and denote by $\mathfrak{FF}\mbox{-}\overline{{\tt GFlat}}(R)$ (respectively, by $\mathfrak{FF}\mbox{-}{\tt PGF}(R)$) the full subcategory of $\mathfrak{FF}\textrm{-}R\textrm{-Mod}$ consisting of the modules of finite Gorenstein flat dimension (respectively, of the PGF modules). As described in Theorem \ref{thr1} (ii), we obtain an additive functor $ \mu ' : \mathfrak{FF}\mbox{-}\overline{{\tt GFlat}}(R) \rightarrow
\mathfrak{FF}\mbox{-}{\tt PGF}(R),$ which is right adjoint to the inclusion functor $\mathfrak{FF}\mbox{-}{{\tt PGF}(R)} \hookrightarrow
\mathfrak{FF}\mbox{-}\overline{{\tt GFlat}}(R)$. Moreover, the vanishing of the counit of this adjunction gives a characterization of the modules of finite flat dimension among modules of finite Gorenstein flat dimension. 

We also consider a complete hereditary cotorsion pair $(\mathcal{K},\mathcal{L})$ such that ${\mathcal{L}}$ contains all projective $R$-modules and $\mathcal{L}$ is closed under kernels of epimorphisms. Moreover, we consider the stabilization $\mathfrak{L}\textrm{-}R\textrm{-Mod}$ of the category of $R$-modules with respect to the class $\mathcal{L}$ and denote by $\mathfrak{L}\mbox{-}\mathcal{K}$ the full subcategory of $\mathfrak{L}\textrm{-}R\textrm{-Mod}$ consisting of the modules in $\mathcal{K}$. Then, we prove that there exists an additive functor $ \mu : \mathfrak{L}\mbox{-}R \textrm{-Mod} \rightarrow \mathfrak{L}\mbox{-}\mathcal{K},$ which is right adjoint to the inclusion functor $\mathfrak{L}\mbox{-}\mathcal{K} \hookrightarrow
\mathfrak{L}\mbox{-}R \textrm{-Mod}$ (see Theorem \ref{thr1} (i)) and obtain similar results. Moreover, we obtain equivalent conditions of a Gorenstein flat module to be flat. 

Using the exact sequence (\ref{eq6}), in Section 4 we consider the $R$-module $F'$ as an approximation of $M$ by a module of finite flat dimension. In particular, we consider the stabilization $\mathfrak{PGF}\textrm{-}R\textrm{-Mod}$ of the category of $R$-modules with respect to the class of PGF modules and denote by $\mathfrak{PGF}\mbox{-}\overline{{\tt GFlat}}(R)$ (respectively, by $\mathfrak{PGF}\mbox{-}\overline{{\tt Flat}}(R)$) the full subcategory of $\mathfrak{PGF}\textrm{-}R\textrm{-Mod}$ consisting of the modules of finite Gorenstein flat dimension (respectively, of finite flat dimension). As described in Theorem \ref{theo2} (i), we construct an additive functor $\nu : \mathfrak{PGF}\mbox{-}\overline{{\tt GFlat}}(R)\rightarrow
\mathfrak{PGF}\mbox{-}\overline{{\tt Flat}}(R)$ which is left adjoint to the inclusion functor $\mathfrak{PGF}\mbox{-}\overline{{\tt Flat}}(R) \hookrightarrow \mathfrak{PGF}\mbox{-}\overline{{\tt GFlat}}(R)$. The vanishing of the unit of this adjunction gives a characterization of the PGF modules among modules of finite Gorenstein flat dimension. 

Sather-Wagstaff, Sharif and White \cite{SSW} investigated the modules that arise from the iteration of the very procedure that leads to the Gorenstein projective modules and proved the stability of the classes of Gorenstein projective and Gorenstein injective modules. Bouchiba and Khaloui \cite{BK} proved the analogous stability of the class of Gorenstein flat modules. In Section 5, we prove that the class of PGF $R$-modules is stable under this very Gorenstein process. In particular, we prove that for every exact sequence of PGF $R$-modules $$\textbf{G}=  \cdots \rightarrow G_1 \rightarrow G_0 \rightarrow G^0 \rightarrow G^1 \rightarrow \cdots$$ such that $M\cong \textrm{Im}(G_0 \rightarrow G^0)$ and $I \otimes_R \_\!\_$ preserves exactness of $\textbf{G}$ for every injective $R$-module $I$, the $R$-module $M$ is PGF (see Theorem \ref{finale}). A central role in the proof is played by the subcategory consisting of the $R$-modules $M$ for which there exists a short exact sequence of the form $0\rightarrow M \rightarrow G \rightarrow M \rightarrow 0$, where $G$ is a PGF $R$-module such that $I \otimes_R \_\!\_$ preserves exactness of this sequence for every right injective $R$-module $I$. An application of this stability result may be found in \cite[Proposition 2.15]{St}.

\section{Preliminaries}

In this section we establish notation, terminology and preliminary results that will be used in the sequel.

\vspace{0.1in}
 
\noindent{\em Terminology.} 
Unless otherwise specified, all modules considered in this paper will be left modules over an arbitrary ring $R$. We denote by $R$-$\textrm{Mod}$ the category of $R$-modules and by ${\tt Proj}(R)$, ${\tt Flat}(R)$, ${\tt Inj}(R)$ the categories of projective, flat and injective $R$-modules respectively. 

\vspace{0.1in}
\noindent{\em Gorenstein modules.}
An acyclic complex $\textbf{P}$ of projective modules is said to be a complete 
projective resolution if the complex of abelian groups $\mbox{Hom}_R(\textbf{P},Q)$
is acyclic for every projective module $Q$. Then, a module is Gorenstein 
projective if it is a syzygy of a complete projective resolution. We denote by ${\tt GProj}(R)$ the class of Gorenstein projective $R$-modules. The 
Gorenstein projective dimension $\mbox{Gpd}_RM$ of a module $M$ is the 
length of a shortest resolution of $M$ by Gorenstein projective modules. 
If no such resolution of finite length exists, then we write 
$\mbox{Gpd}_RM = \infty$. If $M$ is a module of finite projective dimension, then $M$ has finite Gorenstein projective dimension as well and $\mbox{Gpd}_RM = \mbox{pd}_RM$.

An acyclic complex $\textbf{F}$ of flat modules is said to be a complete flat
resolution if the complex of abelian groups $I \otimes_R \textbf{F}$ is acyclic 
for every injective right module $I$. Then, a module is Gorenstein 
flat if it is a syzygy of a complete flat resolution. We denote by ${\tt GFlat}(R)$ the class of Gorenstein flat $R$-modules. The Gorenstein flat dimension 
$\mbox{Gfd}_RM$ of a module $M$ is the length of a shortest resolution 
of $M$ by Gorenstein flat modules. If no such resolution of finite length
exists, then we write $\mbox{Gfd}_RM = \infty$. If $M$ is a module of 
finite flat dimension, then $M$ has finite Gorenstein flat dimension as
well and $\mbox{Gfd}_RM = \mbox{fd}_RM$ (see Corollary \ref{corolaki}).

The notion of a projectively coresolved Gorenstein flat module (PGF-module, for short) is introduced by Saroch and Stovicek \cite{S-S}. A PGF module is a syzygy of an acyclic complex of projective modules $\textbf{P}$, which is such that the complex of abelian groups $I \otimes_R \textbf{P}$ is acyclic for every injective module $I$. We denote by ${\tt PGF}(R)$ the class of PGF $R$-modules. It is clear that the class ${\tt PGF}(R)$ is contained in ${\tt GFlat}(R)$. Moreover, we have the inclusion ${\tt PGF}(R) \subseteq {\tt GProj}(R)$ (see \cite[Theorem 4.4]{S-S}). Finally, the class of PGF $R$-modules, is closed under extensions, direct sums, direct summands and kernels of epimorphisms. 

\medskip
\noindent{\em Cotorsion pairs.} Let $\mathcal{L}$ be a class of $R$-modules. We define its left orthogonal class as $^\perp\mathcal{L}=\{X\in R\text{-}\textrm{Mod}\mid \textrm{Ext}^1(X,L)=0\text{, }  \forall L\in\mathcal{L}\}$. The right orthogonal class of $\mathcal{L}$ is defined dually. An ordered pair of classes, $(\mathcal{K},\mathcal{L})$ is called a cotorsion pair if $\mathcal{K}^\perp=\mathcal{L}$ and $K=$ $^\perp\mathcal{L}$.
A cotorsion pair $(\mathcal{K},\mathcal{L})$ is called complete if for every $R$-module $M$ there are approximation sequences of the form
\[ 0 \rightarrow L \rightarrow K \rightarrow M \rightarrow 0 \,\,\, \,\textrm{and}\,\,\,\, 0 \rightarrow M\rightarrow L' \rightarrow K' \rightarrow 0,\]
 where $K,K'\in\mathcal{K}$ and $L,L'\in\mathcal{L}$. Finally, the cotorsion pair $(\mathcal{K},\mathcal{L})$ is called hereditary if $ \textrm{Ext}^{i>0}(K,L)=0$ for every $L\in\mathcal{L}$ and every $K\in\mathcal{K}$. In that case the class $\mathcal{K}$ is closed under kernels of epimorphisms, while the class $\mathcal{L}$ is closed under cokernels of monomorphisms.

\vspace{0.1in}

\medskip
\noindent The following proposition gives a characterization of PGF modules.
    
\begin{Proposition}\label{pgf}
	The following conditions are equivalent for an $R$-module $M$:
	\begin{enumerate}
		\item[(i)] M is PGF.
		\item[(ii)] M satisfies the following two conditions:
		\begin{itemize}
			\item[(a)] There exists an exact sequence of $R$-modules of the form $0\rightarrow M \rightarrow P^0 \rightarrow P^1 \rightarrow \cdots$, where each $P^i$ is projective, such that $I \otimes_R \_\!\_$ preserves exactness of this sequence for every injective $R$-module $I$.
			\item[(b)] $\textrm{Tor}_i^R(I,M)=0$ for every $i>0$ and every injective $R$-module $I$.
		\end{itemize}
		\item[(iii)] There exists a short exact sequence of $R$-modules of the form $0\rightarrow M \rightarrow P \rightarrow G \rightarrow 0$, where $P$ is projective and $G$ is PGF.
	\end{enumerate}
\end{Proposition}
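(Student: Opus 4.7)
The plan is to establish the implications (i)$\Rightarrow$(ii)$\Rightarrow$(iii)$\Rightarrow$(i), in each case constructing or exploiting a totally acyclic projective complex whose tensor product with every injective remains acyclic.

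For (i)$\Rightarrow$(ii), I start from a complete PGF resolution
\[
\mathbf{P} = \cdots \to P_1 \to P_0 \to P^0 \to P^1 \to \cdots
\]
whose tensor with any injective $I$ is acyclic and with $M$ as a syzygy. Condition (a) is immediate by reading off the right half of $\mathbf{P}$. For (b), the left half is a projective (hence flat) resolution of $M$, and $I\otimes_R\mathbf{P}$ being acyclic means $\textrm{Tor}_i^R(I,M)=0$ for all $i>0$.

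For (ii)$\Rightarrow$(iii), I set $P:=P^0$ and $G:=\textrm{coker}(M\to P^0)$, so that (a) gives a short exact sequence $0\to M\to P\to G\to 0$ and a coresolution $0\to G\to P^1\to P^2\to\cdots$ of $G$ by projectives that is still preserved by $I\otimes_R\_\!\_$ for every injective $I$. To show $G$ is PGF I splice this coresolution with a projective resolution $\cdots\to Q_1\to Q_0\to M\to 0$ composed with the injection $M\hookrightarrow P^0$, obtaining an acyclic complex of projectives
\[
\cdots\to Q_1\to Q_0\to P^0\to P^1\to P^2\to\cdots
\]
with $G$ as a syzygy. The right half remains acyclic after applying $I\otimes_R\_\!\_$ by (a). For the left half, condition (b) ensures $\textrm{Tor}_i^R(I,M)=0$ for $i>0$, and the sharp end, i.e.\ injectivity of $I\otimes M\to I\otimes P^0$, also follows from (a). Hence $G$ is PGF.

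For (iii)$\Rightarrow$(i), given $0\to M\to P\to G\to 0$ with $G$ PGF, I glue a complete PGF resolution of $G$ with a projective resolution $\cdots\to Q_1\to Q_0\to M\to 0$ via the short exact sequence, producing an acyclic projective complex with $M$ as a syzygy. Since $G$ satisfies (ii), the right half (built from the coresolution of $G$ with $P$ prepended) tensored with $I$ stays acyclic, using $\textrm{Tor}_1^R(I,G)=0$ to get exactness of $0\to I\otimes M\to I\otimes P\to I\otimes G\to 0$. For the left half the relevant vanishing is $\textrm{Tor}_i^R(I,M)=0$ for $i>0$, which I obtain from the long exact Tor sequence associated to $0\to M\to P\to G\to 0$: since $P$ is flat, $\textrm{Tor}_i^R(I,M)\cong\textrm{Tor}_{i+1}^R(I,G)=0$ by condition (b) applied to $G$. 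Thus $M$ sits as a syzygy of a complete PGF resolution, i.e.\ is PGF.

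The main bookkeeping obstacle is verifying exactness of the spliced complexes at the precise joining spot after tensoring with $I$; everything else is formal splicing. This is handled in both directions by combining condition (a) (for exactness on the $P^i$ side) with condition (b) translated through the long exact sequence of Tor (for the $Q_i$ side), so no further delicate input beyond the hypotheses is required.
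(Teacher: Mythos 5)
Your proof is correct and is precisely the standard splicing argument that the paper's one-line proof ("standard arguments") alludes to: read off (a) and (b) from a complete resolution, and conversely splice a projective resolution of $M$ onto the coresolution, using condition (b) (via the long exact sequence of $\textrm{Tor}$ and dimension shifting over the flat module $P$) to check acyclicity of $I\otimes_R\_\!\_$ on the left half and at the joining spot. No gaps; the delicate points (injectivity of $I\otimes M\to I\otimes P^0$ and exactness at the splice) are all addressed.
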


\begin{proof}This follows immediately using standard arguments.\end{proof}	 

\noindent{\em Relative PGF and Gorenstein flat modules.} According \cite{G-I} and \cite{E-I-P} one may consider relative Gorenstein classes. More precisely, if $\mathcal{B}$ is a class of right $R$-modules which contains all injective modules, we call a module $M$ Gorenstein $\mathcal{B}$-flat if it is a syzygy in an acyclic complex of flat modules that remains exact after tensoring by any module in $\mathcal{B}$. Similarly, we call a module $M$ projectively coresolved Gorenstein $\mathcal{B}$-flat if it is a syzygy in an acyclic complex of projective modules that remains exact after tensoring by any module in $\mathcal{B}$. We note that \cite[Remark 38]{G-I} and \cite[Theorem 2.14]{E-I-P} give interesting generalizations of \cite[Theorem 4.11]{S-S}.

\begin{Proposition}\label{1.1}{\rm(\cite[Remark 38]{G-I})} Let ${\tt PGF}_{\mathcal{B}}(R)$ be the class of all projectively coresolved Gorenstein $\mathcal{B}$-flat modules. We also suppose that $({\tt PGF}_{\mathcal{B}}(R),{\tt PGF}_{\mathcal{B}}^\perp (R))$ is a complete hereditary cotorsion pair. Then the following conditions are equivalent for a module $M$.
\begin{enumerate}[label=(\roman*)]
    \item $M$ is Gorenstein $\mathcal{B}$-flat,
    \item There is a short exact sequence of the form 
    \[ 0 \rightarrow F \rightarrow P
     \rightarrow M \rightarrow 0,\]
      where $F\in{\tt Flat}(R)$ and $P\in {\tt PGF}_{\mathcal{B}}(R)$, which remains exact after applying the functor $Hom_R(-,C)$ for every cotorsion\footnote{Cotorsion modules are defined to be the modules in ${\tt Flat}(R)^\perp$} module $C$,
     \item $Ext^1(M,C)=0$ for every cotorsion module $C$ that belongs to ${\tt PGF}_{\mathcal{B}}^\perp (R)$,
     \item There is a short exact sequence of the form 
    \[ 0 \rightarrow M \rightarrow   F'
     \rightarrow P' \rightarrow 0\]
      where $F'\in{\tt Flat}(R)$ and $P'\in {\tt PGF}_{\mathcal{B}}(R)$.
\end{enumerate}

\end{Proposition}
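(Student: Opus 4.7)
\emph{Plan.} The strategy mirrors the Šaroch--Šťovíček template for the absolute case \cite[Theorem 4.11]{S-S}, adapted to the relative setting. The two driving cotorsion pairs are the hypothesized $({\tt PGF}_{\mathcal{B}}(R),{\tt PGF}_{\mathcal{B}}^{\perp}(R))$ and the Bican--El~Bashir--Enochs pair $({\tt Flat}(R),{\tt Flat}(R)^{\perp})$. I would first establish (ii)$\Rightarrow$(i) and (iv)$\Rightarrow$(i) by splicing, then (i)$\Rightarrow$(iii) by dimension shift, and finally (iii)$\Rightarrow$(ii) and (iii)$\Rightarrow$(iv) by the two approximation sequences of $({\tt PGF}_{\mathcal{B}}(R),{\tt PGF}_{\mathcal{B}}^{\perp}(R))$.

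For (ii)$\Rightarrow$(i), given $0\to F\to P\to M\to 0$, I would attach a flat resolution of $F$ on the left and the right half of a complete ${\tt PGF}_{\mathcal{B}}$-resolution of $P$ on the right. The resulting acyclic complex of flat modules has $M$ as a syzygy and remains acyclic under $B\otimes_{R}-$ for every $B\in\mathcal{B}$, since both halves do; (iv)$\Rightarrow$(i) is symmetric. For (i)$\Rightarrow$(iii), given a complete flat resolution $\mathbf{F}$ of $M$ and a cotorsion $C\in{\tt PGF}_{\mathcal{B}}^{\perp}(R)$, the cotorsion property of $C$ kills $\mathrm{Ext}^{i}(F',C)$ for every flat $F'$ and $i\ge 1$, yielding $\mathrm{Ext}^{1}(M,C)\cong \mathrm{Ext}^{n+1}(\Omega^{-n}M,C)$ for all $n$. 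Comparing $\mathbf{F}$ with a ${\tt PGF}_{\mathcal{B}}$-resolution of $M$ produced by the cotorsion pair, and using the hereditary property to eliminate $\mathrm{Ext}^{i\ge 1}(P',C)$ for $P'\in{\tt PGF}_{\mathcal{B}}(R)$, forces the right-hand side to vanish.

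For (iii)$\Rightarrow$(ii), I would apply completeness of $({\tt PGF}_{\mathcal{B}}(R),{\tt PGF}_{\mathcal{B}}^{\perp}(R))$ to obtain $0\to K\to P\to M\to 0$ with $P\in{\tt PGF}_{\mathcal{B}}(R)$ and $K\in{\tt PGF}_{\mathcal{B}}^{\perp}(R)$. It remains to show that $K$ is flat and that $\mathrm{Hom}(-,C)$ preserves exactness for every cotorsion $C$; both reduce to proving $\mathrm{Ext}^{1}(M,C)=0$ for \emph{arbitrary} cotorsion $C$. Hypothesis (iii) only supplies this for $C\in{\tt PGF}_{\mathcal{B}}^{\perp}(R)$, so one bridges to the general case via a cotorsion envelope. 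The implication (iii)$\Rightarrow$(iv) is dual, using the other approximation sequence and establishing flatness of the middle term by the same argument.

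The main obstacle is precisely the bridging step: converting $\mathrm{Ext}^{1}$-vanishing against cotorsion modules that lie in ${\tt PGF}_{\mathcal{B}}^{\perp}(R)$ into $\mathrm{Ext}^{1}$-vanishing against all cotorsion modules. This rests on the key input—part of the relative PGF theory of \cite{G-I,E-I-P}—that every flat cotorsion $R$-module already lies in ${\tt PGF}_{\mathcal{B}}^{\perp}(R)$. With this in hand, a cotorsion envelope of an arbitrary cotorsion $C$ produces a short exact sequence $0\to C\to W\to F\to 0$ in which $W$ is flat cotorsion (hence in ${\tt PGF}_{\mathcal{B}}^{\perp}(R)$) and $F$ is flat, and the long exact sequence of $\mathrm{Ext}(M,-)$ propagates (iii) from $W$ to $C$, closing the cycle.
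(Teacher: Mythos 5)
First, a point of comparison: the paper does not prove this proposition at all — it is quoted from \cite[Remark 38]{G-I} (see also \cite[Theorem 2.14]{E-I-P}) — so there is no in-paper argument to measure your proposal against; it must stand on its own. It does not. The fatal problem sits exactly at what you call the ``main obstacle'': in (iii)$\Rightarrow$(ii) you reduce both the flatness of $K$ and the $\mathrm{Hom}_R(-,C)$-exactness to proving $\mathrm{Ext}^1_R(M,C)=0$ for \emph{every} cotorsion $C$. Since $({\tt Flat}(R),{\tt Flat}(R)^{\perp})$ is a cotorsion pair, the modules $M$ with $\mathrm{Ext}^1_R(M,C)=0$ for all cotorsion $C$ are precisely the flat modules; so your reduction amounts to proving that every module satisfying (iii), i.e.\ every Gorenstein $\mathcal{B}$-flat module, is flat — false in general. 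Consistently, the bridging device you invoke does not exist: for an arbitrary cotorsion module $C$ there is no short exact sequence $0\to C\to W\to F\to 0$ with $W$ flat cotorsion and $F$ flat (over $\mathbb{Z}$, the module $\mathbb{Z}/2\mathbb{Z}$ is cotorsion, being pure-injective, yet embeds in no flat, i.e.\ torsion-free, module), and the cotorsion envelope of a module that is already cotorsion is the identity, so it produces no such sequence. The correct direction of travel is the opposite one: one never needs to pass from ${\tt PGF}_{\mathcal{B}}^{\perp}(R)\cap{\tt Flat}(R)^{\perp}$ out to all cotorsion modules, but rather from ${\tt PGF}_{\mathcal{B}}^{\perp}(R)$ into that intersection. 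Since ${\tt PGF}_{\mathcal{B}}(R)\subseteq{\tt PGF}(R)$ (as $\mathcal{B}$ contains the injectives) and PGF modules are left orthogonal to flats by \cite[Theorem 4.11]{S-S}, one has ${\tt Flat}(R)\subseteq{\tt PGF}_{\mathcal{B}}^{\perp}(R)$; hence a special cotorsion preenvelope $0\to K\to C\to F\to 0$ of a module $K\in{\tt PGF}_{\mathcal{B}}^{\perp}(R)$ has middle term in ${\tt PGF}_{\mathcal{B}}^{\perp}(R)\cap{\tt Flat}(R)^{\perp}$, and it is against this class that hypothesis (iii) gets applied (to split a suitable pushout and extract the flat kernel required in (ii)).

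Two further soft spots. In (ii)$\Rightarrow$(i), the complex you describe does not have $M$ as a syzygy (the image of $F\to P$ is $F$, not $M$), and you never address $\mathrm{Tor}_1^R(B,M)=0$, i.e.\ the injectivity of $B\otimes_R F\to B\otimes_R P$; this is precisely where the extra hypothesis in (ii) must enter, via the observation that $\mathrm{Hom}_R(-,B^+)$-exactness for the (pure-injective, hence cotorsion) character module $B^+$ dualizes to $B\otimes_R-$-exactness. In (i)$\Rightarrow$(iii), the dimension shift only yields $\mathrm{Ext}^1_R(M,C)\cong\mathrm{Ext}^{n+1}_R(\Omega^{-n}M,C)$ for all $n$; ``comparing with a ${\tt PGF}_{\mathcal{B}}$-resolution'' does not force this to vanish, since the cosyzygies $\Omega^{-n}M$ have no finite resolution by modules $\mathrm{Ext}$-orthogonal to $C$. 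This implication is the genuinely deep one and requires a real argument (pushouts against special ${\tt PGF}_{\mathcal{B}}$-precovers, or the machinery of \cite{S-S,E-I-P}), not a shift.
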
 
Note that by \cite[Corollary 4.5 (3)]{S-S} the class of projectively coresolved Gorenstein $\overline{{}_RR^{\text{c}}}$-flat modules is exactly the class of PGF modules, where $\overline{{}_RR^{\text{c}}}$ is the definable closure of the
character module $\textrm{Hom}_{\mathbb{Z}}({}_RR,\mathbb{Q}/\mathbb{Z})$ (for a brief introduction to the theory of definable classes see \cite[Section 2]{S-S}, \cite{Prest}). Then, Proposition \ref{1.1} and \cite[Theorem 4.11]{S-S} imply that the class of Gorenstein $\overline{{}_RR^{\text{c}}}$-flat modules is exactly the class of Gorenstein flat modules.





In Sections 3, 4 and 6 we will be concerned with PGF modules and modules of finite Gorenstein flat dimension but our results transfer to the relative case under the assumption that $({\tt PGF}_{\mathcal{B}}(R),{\tt PGF}_{\mathcal{B}}^\perp (R))$ is a complete hereditary cotorsion pair.

\vspace{0.15in}

Even though the exact sequences (1),(2),(3),(4),(5),(6) are not unique, they do satisfy a weak form of uniqueness, as shown below. 


\begin{Lemma}[Schanuel]\label{Scanuel}

Let $M$ be an $R$-module.

(i) Assume that
$0 \rightarrow F \rightarrow L\rightarrow M \rightarrow 0$
and
$0 \rightarrow \overline{F} \rightarrow \overline{L}
   \rightarrow M \rightarrow 0$
are two short exact sequences of $R$-modules with $\mbox{Ext}_{R}^{1} \! \left( L,\overline{F} \right)=0$ and $\mbox{Ext}_{R}^{1} \! \left( \overline{L},F \right)=0$. Then,
there is an isomorphism $L \oplus \overline{F} \simeq \overline{L} \oplus F$.

(ii) Assume that
$0 \rightarrow M \rightarrow F' \rightarrow L' \rightarrow 0$
and
$0 \rightarrow M \rightarrow \overline{F'}
   \rightarrow \overline{L'} \rightarrow 0$
are two short exact sequences of $R$-modules with $\mbox{Ext}_{R}^{1} \! \left(L',\overline{F'} \right)=0$ and $\mbox{Ext}_{R}^{1} \! \left(\overline{L'},F' \right)=0$.
Then, there is an isomorphism
$L' \oplus \overline{F'} \simeq \overline{L'} \oplus F'$.
\end{Lemma}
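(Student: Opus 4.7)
The plan is to use the standard trick of forming a pullback (respectively, a pushout) of the two given short exact sequences, and then to show that the two natural short exact sequences emanating from the pullback (respectively, pushout) both split, thanks to the vanishing $\mathrm{Ext}^{1}$ hypotheses.

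For part (i), I would form the pullback $X$ of the two surjections $L \twoheadrightarrow M$ and $\overline{L} \twoheadrightarrow M$. A standard diagram chase produces a commutative diagram with exact rows and columns, yielding in particular two short exact sequences
\begin{equation*}
0 \longrightarrow \overline{F} \longrightarrow X \longrightarrow L \longrightarrow 0
\qquad\text{and}\qquad
0 \longrightarrow F \longrightarrow X \longrightarrow \overline{L} \longrightarrow 0 ,
\end{equation*}
in which the kernels $F$ and $\overline{F}$ arise from the original sequences via the pullback. The hypothesis $\mathrm{Ext}^{1}_{R}(L,\overline{F})=0$ forces the first sequence to split, so $X \simeq L \oplus \overline{F}$; the hypothesis $\mathrm{Ext}^{1}_{R}(\overline{L},F)=0$ forces the second to split, so $X \simeq \overline{L} \oplus F$. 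Combining the two isomorphisms gives $L \oplus \overline{F} \simeq \overline{L} \oplus F$, as required.

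Part (ii) is strictly dual: I would form the pushout $Y$ of the two injections $M \hookrightarrow F'$ and $M \hookrightarrow \overline{F'}$, obtaining short exact sequences
\begin{equation*}
0 \longrightarrow F' \longrightarrow Y \longrightarrow \overline{L'} \longrightarrow 0
\qquad\text{and}\qquad
0 \longrightarrow \overline{F'} \longrightarrow Y \longrightarrow L' \longrightarrow 0 .
\end{equation*}
The vanishing $\mathrm{Ext}^{1}_{R}(\overline{L'},F')=0$ splits the first, giving $Y \simeq F' \oplus \overline{L'}$; the vanishing $\mathrm{Ext}^{1}_{R}(L',\overline{F'})=0$ splits the second, giving $Y \simeq \overline{F'} \oplus L'$. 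The desired isomorphism $L' \oplus \overline{F'} \simeq \overline{L'} \oplus F'$ follows at once.

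There is no real obstacle here: the argument is the classical Schanuel construction, generalized from ``$P$ projective'' to the weaker hypothesis that the relevant $\mathrm{Ext}^{1}$ groups vanish. The only points requiring a little care are the verification that the pullback (respectively, pushout) really does produce the two claimed short exact sequences with the stated kernels (respectively, cokernels), which is a routine diagram-chase, and being consistent with the direction of splitting so that the correct $\mathrm{Ext}^{1}$ hypothesis is applied to the correct sequence.
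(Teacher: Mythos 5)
Your proposal is correct. The only difference from the paper's argument is one of packaging: the paper first uses $\mathrm{Ext}^{1}_{R}(L,\overline{F})=0$ to lift the identity of $M$ to a comparison map $f\colon L\rightarrow \overline{L}$ (with induced $g\colon F\rightarrow \overline{F}$), views $(g,f)$ as a quasi-isomorphism of two-term complexes, and takes its mapping cone to obtain the single short exact sequence $0\rightarrow F\rightarrow L\oplus\overline{F}\rightarrow \overline{L}\rightarrow 0$, which is then split using $\mathrm{Ext}^{1}_{R}(\overline{L},F)=0$. You instead form the pullback $X$ (resp.\ pushout $Y$) unconditionally and split both of its canonical short exact sequences, one with each $\mathrm{Ext}^{1}$ hypothesis. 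The two routes are essentially equivalent -- a splitting of $0\rightarrow\overline{F}\rightarrow X\rightarrow L\rightarrow 0$ is exactly a lift $f\colon L\rightarrow\overline{L}$ over $M$, and under the resulting identification $X\simeq L\oplus\overline{F}$ your second pullback sequence becomes the paper's mapping cone sequence -- but yours is slightly more symmetric, while the paper's avoids verifying the exactness of the pullback/pushout rows and columns by invoking the acyclicity of the cone of a quasi-isomorphism. Part (ii) dualizes correctly in both treatments.
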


\begin{proof}(i) Since $\textrm{Ext}_{R}^{1}( L,\overline{F})=0$, there exists an $R$-linear map $f : L \rightarrow \overline{L}$
	making the right square in the following diagram with exact rows commutative:
	\[
	\begin{array}{ccccccccc}
		0 & \rightarrow & F & \rightarrow & L & \rightarrow & M &
		\rightarrow & 0 \\
		& & \!\!\! {\scriptstyle{g}} \downarrow & &
		\!\!\! {\scriptstyle{f}} \downarrow
		& & \parallel & & \\
		0 & \rightarrow & \overline{F} & \rightarrow & \overline{L} &
		\rightarrow & M & \rightarrow & 0
	\end{array}
	\]
	Then, there exists a unique $R$-linear map $g : F \rightarrow \overline{F}$
	which makes the left square of the diagram commutative as well. We may view the
	pair $(g,f)$ as a quasi-isomorphism between the complexes
	$0 \rightarrow F \rightarrow L \rightarrow 0$ and
	$0 \rightarrow \overline{F} \rightarrow \overline{L} \rightarrow 0$
	and consider the corresponding mapping cone, in order to obtain a short exact sequence of the form
	\[ 0 \rightarrow F \rightarrow L \oplus \overline{F}
	\rightarrow \overline{L} \rightarrow 0 . \]
	Since $\textrm{Ext}_{R}^{1} (\overline{L},F)=0$, the above exact sequence splits. Consequently, there exists an
	isomorphism $L \oplus \overline{F} \simeq \overline{L} \oplus F$, as needed.
	
	(ii) Since $\textrm{Ext}_{R}^{1} (L',\overline{F'})=0$, there exists an $R$-linear map $g' : F' \rightarrow \overline{F'}$
	making the left square in the following diagram with exact rows commutative:
	\[
	\begin{array}{ccccccccc}
		0 & \rightarrow & M & \rightarrow & F' & \rightarrow & L' &
		\rightarrow & 0 \\
		& & \parallel & & \!\!\! {\scriptstyle{g'}} \downarrow
		& & \!\!\! {\scriptstyle{f'}} \downarrow & & \\
		0 & \rightarrow & M & \rightarrow & \overline{F'} &
		\rightarrow & \overline{L'} & \rightarrow & 0
	\end{array}
	\]
	Then, there exists a unique $R$-linear map $f' : L' \rightarrow \overline{L'}$
	which makes the right square of the diagram commutative as well. We may view the
	pair $(g',f')$ as a quasi-isomorphism between the complexes
	$0 \rightarrow F' \rightarrow L' \rightarrow 0$ and
	$0 \rightarrow \overline{F'} \rightarrow \overline{L'}
	\rightarrow 0$
	and consider the corresponding mapping cone, in order to obtain a short exact sequence of the form
	\[ 0 \rightarrow F' \rightarrow L' \oplus \overline{F'}
	\rightarrow \overline{L'} \rightarrow 0 . \]
	Since $\textrm{Ext}_{R}^{1}( \overline{L'},F')=0$, the above exact sequence splits. Consequently, there exists an
	isomorphism $L' \oplus \overline{F'} \simeq \overline{L'} \oplus F'$, as needed.
\end{proof}

\begin{Lemma}\label{lemataki} Let $M$ be a Gorenstein flat $R$-module of finite flat dimension. Then $M$ is flat.
\end{Lemma}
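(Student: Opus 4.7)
The plan is to show that $M$ is flat by proving directly that $\mbox{Tor}_i^R(X,M)=0$ for every right $R$-module $X$ and every $i \ge 1$. The two ingredients are the Gorenstein flatness of $M$, which supplies the vanishing of Tor against injectives, and the assumption $\mbox{fd}_R M < \infty$, which bounds the degrees in which Tor with $M$ can be nonzero.

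The first step records the auxiliary vanishing $\mbox{Tor}_i^R(I,M) = 0$ for every injective right $R$-module $I$ and every $i \ge 1$. By definition, $M$ is a syzygy of an acyclic complex $\mathbf{F}$ of flat modules whose tensor product with every injective right module remains acyclic. The left half of $\mathbf{F}$ is a flat resolution of $M$, and tensoring with $I$ preserves its acyclicity in positive degrees, which gives the claim.

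The second step is a dimension shift. For an arbitrary right $R$-module $X$, I would fix an injective coresolution $0 \to X \to J^0 \to J^1 \to \cdots$ and set $X^0 := X$ and $X^{k+1} := \mathrm{coker}(X^k \to J^k)$, so that each $0 \to X^k \to J^k \to X^{k+1} \to 0$ is short exact. The long exact sequence of Tor, combined with the vanishing from Step 1 applied to $J^k$, collapses to an isomorphism
\[
\mbox{Tor}_{i+1}^R(X^{k+1}, M) \cong \mbox{Tor}_i^R(X^k, M)
\]
for all $i \ge 1$ and $k \ge 0$. Iterating this $n := \mbox{fd}_R M$ times produces $\mbox{Tor}_i^R(X, M) \cong \mbox{Tor}_{i+n}^R(X^n, M)$, and for $i \ge 1$ the right-hand side vanishes since $i + n > n$. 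Hence $\mbox{Tor}_i^R(X, M) = 0$ for every right $R$-module $X$ and every $i \ge 1$, so $M$ is flat. I do not foresee a genuine obstacle: the whole argument reduces to a clean dimension shift, contingent only on the Tor-vanishing against injectives that follows immediately from the definition of a Gorenstein flat module.
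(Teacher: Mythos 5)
Your proof is correct and rests on exactly the same two ingredients as the paper's: the vanishing of $\mathrm{Tor}_{i}^{R}(I,M)$ for injective $I$ coming from Gorenstein flatness, and the degree bound from $\mathrm{fd}_{R}M=n$, combined via dimension shifting along embeddings into injectives. The only difference is organizational — you iterate the shift along a full injective coresolution to kill $\mathrm{Tor}_{i}^{R}(X,M)$ for all $X$ directly, whereas the paper performs a single shift at the top degree $n$ and derives a contradiction — so this is essentially the paper's argument.
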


\begin{proof}We let $\textrm{fd}_{R}M=n<\infty$ and assume that $n>0$. Since the $R$-module $M$ is Gorenstein flat, we have $\textrm{Tor}_i^R(J,M)=0$ for every $i>0$ and every injective right $R$-module $J$ (see \cite[Lemma 2.4]{Bennis}). Since $\textrm{fd}_{R}M=n$, there exists a right $R$-module $N$ such that $\textrm{Tor}_n^R(N,M)\neq 0$. Consider an exact sequence of $R$-modules of the form $0\rightarrow N \rightarrow I \rightarrow N'\rightarrow 0$ where the $R$-module $I$ is injective. The short exact sequence above induces a long exact sequence of the form $$\cdots \rightarrow \textrm{Tor}_{n+1}^R(N',M)\rightarrow \textrm{Tor}_{n}^R(N,M)\rightarrow \textrm{Tor}_{n}^R(I,M)\rightarrow\textrm{Tor}_{n}^R(N',M)\rightarrow \cdots.$$ Since $\textrm{Tor}_{n+1}^R(N',M)=0$ and $\textrm{Tor}_n^R(N,M)\neq 0$, we obtain that $\textrm{Tor}_n^R(I,M)\neq 0$ which is a contradiction. We conclude that $n=0$ and hence the $R$-module $M$ is flat.
\end{proof}

\begin{Corollary}\label{corolaki}Let $M$ be an $R$-module of finite flat dimension. Then, $\textrm{Gfd}_R M =\textrm{fd}_R M$.\end{Corollary}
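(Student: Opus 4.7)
The plan is to prove the two inequalities $\textrm{Gfd}_R M \leq \textrm{fd}_R M$ and $\textrm{fd}_R M \leq \textrm{Gfd}_R M$ separately, with the second one being where Lemma \ref{lemataki} does the real work. The first inequality is immediate: any flat resolution of $M$ of length $n=\textrm{fd}_R M$ is \emph{a fortiori} a Gorenstein flat resolution, so $\textrm{Gfd}_R M \leq n$.

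For the reverse inequality, I would set $g = \textrm{Gfd}_R M$, so that $g \leq n < \infty$, and fix a flat resolution
\[ 0 \rightarrow F_n \rightarrow F_{n-1} \rightarrow \cdots \rightarrow F_0 \rightarrow M \rightarrow 0 .\]
Let $K = \ker(F_{g-1} \rightarrow F_{g-2})$, with the convention $K = M$ when $g=0$. The standard syzygy characterization of the Gorenstein flat dimension (that the $g$-th syzygy in any flat resolution of a module of Gorenstein flat dimension $\leq g$ is itself Gorenstein flat) then forces $K$ to be Gorenstein flat. On the other hand, the tail of the resolution
\[ 0 \rightarrow F_n \rightarrow \cdots \rightarrow F_g \rightarrow K \rightarrow 0\]
exhibits $K$ as a module of flat dimension at most $n-g$, in particular finite. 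Applying Lemma \ref{lemataki} to $K$ shows that $K$ is flat, and splicing this back into the truncated resolution of length $g$ yields $\textrm{fd}_R M \leq g$, which combined with the first inequality gives $\textrm{Gfd}_R M = \textrm{fd}_R M$.

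The only step that is not formal is the invocation of the syzygy characterization of Gorenstein flat dimension, which is the main obstacle; it relies on the class $\tt{GFlat}(R)$ being resolving (closed under extensions and kernels of epimorphisms), a fact whose validity over arbitrary rings was established by Saroch and Stovicek. Once this standard input is granted, the argument is a direct dimension-shift combined with Lemma \ref{lemataki}.
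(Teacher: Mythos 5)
Your proposal is correct and follows essentially the same route as the paper: both reduce to showing that a suitable syzygy in a flat resolution is simultaneously Gorenstein flat (via the GF-closedness of arbitrary rings due to Šaroch and Šťovíček together with the syzygy characterization of Gorenstein flat dimension) and of finite flat dimension, and then apply Lemma \ref{lemataki} to conclude it is flat. The only cosmetic difference is that you truncate at the $g$-th syzygy with $g=\textrm{Gfd}_RM$ inside a finite flat resolution, while the paper takes the corresponding syzygy of an arbitrary flat resolution; the inputs and the splicing argument are identical.
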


\begin{proof}It suffices to prove the inequality $\textrm{fd}_R M\leq \textrm{Gfd}_R M$, where $\textrm{Gfd}_R M=n<\infty$. Let $\textbf{F}= \cdots \rightarrow F_1 \rightarrow F_0 \rightarrow M \rightarrow 0$ be a flat resolution of $M$. Since every ring $R$ is ${\tt GF}$-closed (see \cite[Corollary 4.12]{S-S}), the $n$-th syzygy module $M_n=\textrm{Im}(F_n \rightarrow F_{n-1})$ is Gorenstein flat by \cite[Theorem 2.8]{Bennis} and $\textrm{fd}_R M_n <\infty$. Consequently, the $R$-module $M_n$ is flat by Lemma \ref{lemataki} and the exact sequence $$0\rightarrow M_n \rightarrow F_{n-1}\rightarrow \cdots \rightarrow F_1 \rightarrow F_0 \rightarrow M \rightarrow 0$$ implies that $\textrm{fd}_R M\leq n$.\end{proof}

\section{Stabilizing with respect to modules of finite flat dimension}

Let $M,N$ be two modules. Since the direct sum of two modules of finite flat dimension has finite flat dimension as well, the set consisting of those $R$-linear maps
$f : M \rightarrow N$ that factor through a module of finite flat dimension is a subgroup of the abelian group $\mbox{Hom}_{R}(M,N)$. We denote by
$\mathfrak{FF}\mbox{-}\mbox{Hom}_{R}(M,N)$ the corresponding quotient
group and let $[f] = [f]_{\mathfrak{FF}}$ be the class of any $R$-linear map
$f \in \mbox{Hom}_{R}(M,N)$ therein. We note that for any three modules $M,N$ 
and $L$ the composition of $R$-linear maps induces a well-defined biadditive map
\[ \mathfrak{FF}\mbox{-}\mbox{Hom}_{R}(N,L) \times
   \mathfrak{FF}\mbox{-}\mbox{Hom}_{R}(M,N)
   \rightarrow \mathfrak{FF}\mbox{-}\mbox{Hom}_{R}(M,L) .\]
We also denote by $\mathfrak{FF}\mbox{-}R\mbox{-Mod}$ the category whose
objects are all modules and whose morphism sets are given by the abelian
groups $\mathfrak{FF}\mbox{-}\mbox{Hom}_{R}(M,N)$ (with composition of
morphisms induced by the composition of $R$-linear maps). Then, Lemma \ref{Scanuel}
implies that the PGF module $G$, which is
defined for any module $M$ of finite Gorenstein flat dimension by (5), is uniquely determined up to isomorphism
as an object of the category $\mathfrak{FF}\mbox{-}R\mbox{-Mod}$.

Let $\mathcal{L}$ be any class of modules closed under finite direct sums. Then, the set consisting of those $R$-linear maps
$f : M \rightarrow N$ that factor through a module in the class $\mathcal{L}$ is a subgroup of the abelian group $\mbox{Hom}_{R}(M,N)$. We denote by $\mathfrak{L}\mbox{-}\mbox{Hom}_{R}(M,N)$ the corresponding quotient
group and by $[f]_{\mathfrak{L}}$ the class of any $f \in \mbox{Hom}_{R}(M,N)$. Moreover, for every class of $R$-modules $\mathcal{K}$, we denote by $\mathfrak{L}\mbox{-}\mathcal{K}$ the category whose objects are all modules in $\mathcal{K}$ and whose morphism sets are given by the abelian groups $\mathfrak{L}\mbox{-}\mbox{Hom}_{R}(M,N)$.

\medskip
\begin{Lemma}\label{2.1}
Let $f : M \rightarrow N$ be an $R$-linear map, where $M,N$ are $R\mbox{-modules}$ and consider two classes of $R$-modules $\mathcal{L},\mathcal{K}$ such that $\mathcal{K}\subseteq {}^\perp\mathcal{L}$, ${\tt Proj}(R)\subseteq{\mathcal{L}}$ and $\mathcal{L}$ is closed under finite direct sums and kernels of epimorphisms. We also consider two exact sequences
of $R$-modules
\[ 0 \rightarrow L \stackrel{\iota}{\rightarrow} K
     \stackrel{p}{\rightarrow} M \rightarrow 0
   \;\;\; \mbox{and} \;\;\;
   0 \rightarrow L' \stackrel{\jmath}{\rightarrow} K'
     \stackrel{q}{\rightarrow} N \rightarrow 0 , \]
where $K,K'\in\mathcal{K}$ and $L,L'\in\mathcal{L}$.
Then,
\begin{itemize}
\item[(i)] There exists an $R$-linear map $g : K \rightarrow K'$, such that
$qg = fp$.

\item[(ii)] If $g,g' : K \rightarrow K'$ are two $R$-linear maps with
$qg = fp$ and $qg' = fp$, then
$[g] = [g'] \in \mathfrak{L}\mbox{-}\mbox{Hom}_{R}(K,K')$.

\item[(iii)] If $[f] = [0] \in \mathfrak{L}\mbox{-}\mbox{Hom}_{R}(M,N)$
and $g : K \rightarrow K'$ is an $R$-linear map such that $qg = fp$,
then $[g] = [0] \in \mathfrak{L}\mbox{-}\mbox{Hom}_{R}(K,K')$.
\end{itemize}
\end{Lemma}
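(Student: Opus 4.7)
The plan is to treat the three parts in order, following the classical comparison-theorem pattern; the twist in (iii) is that one must actually construct a lift that factors through a module in $\mathcal{L}$, rather than merely show two lifts are homotopic. For (i), I would apply $\mathrm{Hom}_R(K,-)$ to the right-hand short exact sequence and read off the piece
\[ \mathrm{Hom}_R(K,K') \xrightarrow{q_\ast} \mathrm{Hom}_R(K,N) \longrightarrow \mathrm{Ext}^1_R(K,L'). \]
Since $K \in \mathcal{K} \subseteq {}^\perp\mathcal{L}$ and $L' \in \mathcal{L}$, the last group vanishes, so $fp$ lies in the image of $q_\ast$ and a lift $g$ exists. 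For (ii), if $g,g'$ are two such lifts then $q(g-g')=0$, so $g-g'$ factors through $\ker q \cong L' \in \mathcal{L}$, giving $[g]=[g']$ in $\mathfrak{L}\text{-}\mathrm{Hom}_R(K,K')$.

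The heart of the lemma is (iii). By hypothesis $f$ factors as $M \xrightarrow{\alpha} L'' \xrightarrow{\beta} N$ for some $L'' \in \mathcal{L}$. I would then pick any epimorphism $\pi : P \twoheadrightarrow L''$ with $P$ projective and set $\Omega = \ker \pi$. Because $\mathrm{Proj}(R) \subseteq \mathcal{L}$ we have $P \in \mathcal{L}$, and since $\mathcal{L}$ is closed under kernels of epimorphisms, $\Omega \in \mathcal{L}$ as well. Applying $\mathrm{Hom}_R(K,-)$ to $0 \to \Omega \to P \to L'' \to 0$, the obstruction to lifting $\alpha p : K \to L''$ through $\pi$ lies in $\mathrm{Ext}^1_R(K,\Omega)$, which vanishes since $K \in \mathcal{K}$ and $\Omega \in \mathcal{L}$; thus one obtains $\tilde\alpha : K \to P$ with $\pi\tilde\alpha = \alpha p$. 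Separately, projectivity of $P$ together with surjectivity of $q$ yields $\gamma : P \to K'$ with $q\gamma = \beta\pi$. Setting $g_0 := \gamma\tilde\alpha$ gives
\[ q g_0 = q\gamma\tilde\alpha = \beta\pi\tilde\alpha = \beta\alpha p = fp, \]
so $g_0$ is a valid lift. Since $g_0$ factors through the projective (hence $\mathcal{L}$-) module $P$, we have $[g_0]=0$, and part (ii) applied to $g$ and $g_0$ then forces $[g] = [g_0] = 0$.

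The main obstacle is (iii): the naive approach of lifting $\alpha : M \to L''$ directly to a map into $K'$ is not available, because one has no control over $\mathrm{Ext}^1_R(M,L')$. The device that unlocks the argument is to manufacture the lift $g_0$ via a projective presentation of $L''$, which is the only place where the hypotheses $\mathrm{Proj}(R) \subseteq \mathcal{L}$ and closure of $\mathcal{L}$ under kernels of epimorphisms are jointly essential; once $g_0$ is in hand, part (ii) reduces the statement to the triviality $[g_0]=0$.
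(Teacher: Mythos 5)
Your proposal is correct and follows essentially the same argument as the paper: Ext-vanishing for (i), factoring the difference through $\ker q \cong L'$ for (ii), and in (iii) building a distinguished lift through a projective presentation of the $\mathcal{L}$-module that $f$ factors through, then invoking (ii). The only cosmetic difference is that the paper phrases both lifting steps in (iii) as applications of part (i) (after a harmless ``WLOG ${\tt Proj}(R)\subseteq\mathcal{K}$''), whereas you argue the second lift directly from projectivity of $P$; the content is identical.
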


\begin{proof}(i) Since $\mbox{Ext}_{R}^{1}(K,L')=0$ the additive
map $q_{*} : \mbox{Hom}_{R}(K,K') \rightarrow \mbox{Hom}_{R}(K,N)$
is surjective. Consequently, there exists an $R$-linear map
$g : K\rightarrow K'$ such that $fp = q_{*}(g)$, as needed.

(ii) Let $g,g' : K \rightarrow K'$ be two $R$-linear maps with
$qg = fp$ and $qg' = fp$.
\[
\begin{array}{ccccccccc}
 0 & \rightarrow & L & \stackrel{\iota}{\rightarrow} & K
   & \stackrel{p}{\rightarrow} & M & \rightarrow & 0 \\
 & & & & {\scriptstyle{g}} \downarrow \downarrow {\scriptstyle{g'}}
 & & \!\!\! {\scriptstyle{f}} \downarrow & & \\
 0 & \rightarrow & L' & \stackrel{\jmath}{\rightarrow} & K'
   & \stackrel{q}{\rightarrow} & N & \rightarrow & 0
\end{array}
\]
Then, $q(g'-g) = qg' - qg = fp - fp = 0$ and hence there exists an
$R$-linear map $h : K \rightarrow L'$, such that $g'-g = \jmath h$.
Since $L'\in\mathcal{L}$ we obtain that
$[g] = [g'] \in \mathfrak{L}\mbox{-}\mbox{Hom}_{R}(K,K')$.

(iii) Without loss of generality we may assume that ${\tt Proj}(R)\subseteq{\mathcal{K}}$. Let $f$ be a linear map which factors as the composition of two $R$-linear maps
$M \stackrel{a}{\rightarrow} \Lambda \stackrel{b}{\rightarrow} N$,
where the module $\Lambda$ belongs to $\mathcal{L}$. Since the class $\mathcal{L}$ is closed under kernels of epimorphisms and every projective module is in $\mathcal{L}$, we construct an exact sequence of $R$-modules of the form
\[ 0 \rightarrow \Lambda' \rightarrow P
     \stackrel{\pi}{\rightarrow} \Lambda \rightarrow 0 \]
where $P$ is projective and
$\Lambda' \in\mathcal{L}$. Using (i) above, we obtain $R$-linear maps
$\alpha : K \rightarrow P$ and $\beta : P \rightarrow K'$, such
that $\pi \alpha = ap$ and $q \beta = b \pi$.
\[
\begin{array}{ccccccccc}
 0 & \rightarrow & L & \stackrel{\iota}{\rightarrow} & K
   & \stackrel{p}{\rightarrow} & M & \rightarrow & 0 \\
 & & & & \!\!\! {\scriptstyle{\alpha}} \downarrow & &
         \!\!\! {\scriptstyle{a}} \downarrow & & \\
 0 & \rightarrow & \Lambda' & \rightarrow & P &
     \stackrel{\pi}{\rightarrow} & \Lambda & \rightarrow & 0 \\
 & & & & \!\!\! {\scriptstyle{\beta}} \downarrow & &
         \!\!\! {\scriptstyle{b}} \downarrow & & \\
 0 & \rightarrow & L' & \stackrel{\jmath}{\rightarrow} & K'
   & \stackrel{q}{\rightarrow} & N & \rightarrow & 0
\end{array}
\]
Consequently, $q(\beta \alpha) = (ba)p = fp$ and hence for every
$R$-linear map $g : K \rightarrow K'$ such that $qg = fp$ we obtain that
$[g] = [\beta \alpha] \in \mathfrak{L}\mbox{-}\mbox{Hom}_{R}(K,K')$
(see (ii) above). Since $[\beta \alpha] = [0] \in \mathfrak{L}\mbox{-}\mbox{Hom}_{R}(K,K')$, we conclude that $[g] = [0] \in \mathfrak{L}\mbox{-}\mbox{Hom}_{R}(K,K')$.\end{proof}

\begin{Remark}\label{remarkaki}Let $\overline{{\tt Flat}}(R)$ be the class of $R$-modules with finite flat dimension and consider a PGF $R$-module $K$. Since the functors $\textrm{Ext}^i_R(K,\_\!\_)$ vanish on the class ${\tt Flat}(R)$ for every $i>0$ (see \cite[Theorem 4.11]{S-S}), a simple inductive argument implies that the functors $\textrm{Ext}^i_R(K,\_\!\_)$ vanish also on the class $\overline{{\tt Flat}}(R)$, and hence ${\tt PGF}(R)\subseteq {}^\perp\overline{{\tt Flat}}(R)$. Moreover, ${\tt Proj}(R)\subseteq\overline{{\tt Flat}}(R)$ and the class $\overline{{\tt Flat}}(R)$ is closed under finite direct sums and kernels of epimorphisms.
\end{Remark}

\begin{Lemma}\label{2.2}
Let $f : M \rightarrow N$ be an $R$-linear map, where $M,N$ are modules of finite Gorenstein flat dimension. We also consider two exact sequences
of $R$-modules
\[ 0 \rightarrow F \stackrel{\iota}{\rightarrow} L
     \stackrel{p}{\rightarrow} M \rightarrow 0
   \;\;\; \mbox{and} \;\;\;
   0 \rightarrow F' \stackrel{\jmath}{\rightarrow} L'
     \stackrel{q}{\rightarrow} N \rightarrow 0 , \]
where $F,F'$ are modules of finite flat dimension and $L,L'$ are PGF modules.
Then,
\begin{itemize}
\item[(i)] There exists an $R$-linear map $g : L \rightarrow L'$, such that
$qg = fp$.

\item[(ii)] If $g,g' : L \rightarrow L'$ are two $R$-linear maps with
$qg = fp$ and $qg' = fp$, then
$[g] = [g'] \in \mathfrak{FF}\mbox{-}\mbox{Hom}_{R}(L,L')$.

\item[(iii)] If $[f] = [0] \in \mathfrak{FF}\mbox{-}\mbox{Hom}_{R}(M,N)$
and $g : L \rightarrow L'$ is an $R$-linear map such that $qg = fp$,
then $[g] = [0] \in \mathfrak{FF}\mbox{-}\mbox{Hom}_{R}(L,L')$.
\end{itemize}
\end{Lemma}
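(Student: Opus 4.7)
The plan is to deduce Lemma \ref{2.2} as a direct specialization of Lemma \ref{2.1}, taking $\mathcal{L}=\overline{{\tt Flat}}(R)$ (the class of $R$-modules of finite flat dimension) and $\mathcal{K}={\tt PGF}(R)$. With this choice, the morphism groups $\mathfrak{L}\mbox{-}\mbox{Hom}_{R}(-,-)$ coincide by definition with $\mathfrak{FF}\mbox{-}\mbox{Hom}_{R}(-,-)$, so the three conclusions of Lemma \ref{2.1} translate directly into the three conclusions of Lemma \ref{2.2}.

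The only thing to verify is that the hypotheses of Lemma \ref{2.1} are met for this choice of pair $(\mathcal{K},\mathcal{L})$. All three required conditions are collected in Remark \ref{remarkaki}: the inclusion ${\tt PGF}(R)\subseteq{}^{\perp}\overline{{\tt Flat}}(R)$ (obtained there by induction from the vanishing of $\textrm{Ext}^{i}_{R}(K,\_\!\_)$ on ${\tt Flat}(R)$ for every PGF module $K$, via \cite[Theorem 4.11]{S-S}), the inclusion ${\tt Proj}(R)\subseteq\overline{{\tt Flat}}(R)$, and the closure of $\overline{{\tt Flat}}(R)$ under finite direct sums and kernels of epimorphisms.

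Given this, the two exact sequences appearing in the statement, $0\rightarrow F\rightarrow L\rightarrow M\rightarrow 0$ and $0\rightarrow F'\rightarrow L'\rightarrow N\rightarrow 0$, are precisely of the form required in Lemma \ref{2.1} (the existence of such sequences for $M,N$ of finite Gorenstein flat dimension is the content of \cite[Theorem 2.1]{Em}, but the lemma itself does not need that; it simply takes them as input). Applying parts (i), (ii), (iii) of Lemma \ref{2.1} with the above specialization immediately yields the statements of Lemma \ref{2.2}. There is no real obstacle here, since all the technical content already sits in Lemma \ref{2.1} and in Remark \ref{remarkaki}; the proof is essentially a one-line invocation.
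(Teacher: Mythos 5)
Your proposal is correct and is exactly the paper's own argument: the paper proves Lemma \ref{2.2} by citing Lemma \ref{2.1} together with Remark \ref{remarkaki}, which is precisely the specialization $\mathcal{L}=\overline{{\tt Flat}}(R)$, $\mathcal{K}={\tt PGF}(R)$ that you describe. Your verification that Remark \ref{remarkaki} supplies all the hypotheses of Lemma \ref{2.1} is the only content needed, and it is accurate.
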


\begin{proof}This is a direct consequence of Lemma \ref{2.1} and Remark \ref{remarkaki}. \end{proof}

The next propositions shows that under the right assumptions, the categories $\mathfrak{L}\mbox{-}\mathcal{K}$ we defined in the start of this section are nothing more than the well known stable categories.

\begin{Proposition}\label{2.3} Let $\mathcal{L},\mathcal{K}$ be two classes of $R$-modules such that $\mathcal{K}\subset {}^\perp\mathcal{L}$, ${\tt Proj}(R)\subset{\mathcal{L}}$ and $\mathcal{L}$ is closed under kernels of epimorphisms. Consider the category  $\mathfrak{FF}\mbox{-}{{\tt PGF}(R)}$ whose objects are all PGF modules and whose morphism sets are the abelian groups $\mathfrak{FF}\mbox{-}\mbox{Hom}_{R}(M,N)$. Then:
\begin{itemize}
    \item[(i)]  $\mathfrak{L}\mbox{-}\mathcal{K}$ is exactly the stable category of $\mathcal{K}$,
    \item[(ii)] $\mathfrak{FF}\mbox{-}{{\tt PGF}(R)}$ is exactly the stable category of PGF modules,
    \item[(iii)] $\mathfrak{FF}\mbox{-}{{\tt PGF}(R)}$ is  exactly $\mathfrak{PGF}^\perp\mbox{-}{{\tt PGF}(R)}$, where $\mathfrak{PGF}^\perp\mbox{-}{{\tt PGF}(R)}$ is the stabilization of PGF modules with respect to ${{\tt PGF}(R)}^\perp$.
\end{itemize}

\end{Proposition}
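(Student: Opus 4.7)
The plan is to reduce (ii) and (iii) to (i) and to prove (i) by a single Ext-lifting trick. Recall that the \emph{stable category} of a class $\mathcal{C}$ of $R$-modules has the same objects as $\mathcal{C}$ and morphisms $\mathrm{Hom}_R(M,N)$ modulo the subgroup of maps that factor through a projective module. Statement (i) therefore reduces to showing that, for $K,K'\in\mathcal{K}$, an $R$-linear map $K\to K'$ factors through a module in $\mathcal{L}$ if and only if it factors through a projective module.

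For (i), one direction is automatic from ${\tt Proj}(R)\subseteq\mathcal{L}$. For the converse, suppose $f:K\to K'$ factors as $K\xrightarrow{a}\Lambda\xrightarrow{b}K'$ with $\Lambda\in\mathcal{L}$. Choose a surjection $\pi:P\twoheadrightarrow\Lambda$ with $P$ projective and set $\Lambda':=\ker\pi$. Since $\mathcal{L}$ is closed under kernels of epimorphisms and contains $P$, we have $\Lambda'\in\mathcal{L}$; and since $K\in\mathcal{K}\subseteq{}^\perp\mathcal{L}$, the obstruction $\mathrm{Ext}^1_R(K,\Lambda')$ vanishes, so $a$ lifts to some $\widetilde{a}:K\to P$ with $\pi\widetilde{a}=a$. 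Then $f=b\pi\widetilde{a}$ factors through the projective module $P$. This is essentially the argument already carried out in the proof of Lemma~\ref{2.1}(iii), applied to the trivial sequences $0\to 0\to K\to K\to 0$ and $0\to 0\to K'\to K'\to 0$.

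For (ii), apply (i) with $\mathcal{K}={\tt PGF}(R)$ and $\mathcal{L}=\overline{{\tt Flat}}(R)$; the hypotheses on $(\mathcal{K},\mathcal{L})$ are precisely what Remark~\ref{remarkaki} records. For (iii), apply (i) with $\mathcal{K}={\tt PGF}(R)$ and $\mathcal{L}={\tt PGF}(R)^\perp$: the inclusion ${\tt PGF}(R)\subseteq{}^\perp({\tt PGF}(R)^\perp)$ is built into the definition of a cotorsion pair; heredity of the complete hereditary cotorsion pair $({\tt PGF}(R),{\tt PGF}(R)^\perp)$ yields that ${\tt PGF}(R)^\perp$ is closed under kernels of epimorphisms; and ${\tt Proj}(R)\subseteq{\tt PGF}(R)^\perp$ because every PGF module is Gorenstein projective, whence $\mathrm{Ext}^1_R(K,P)=0$ for every PGF $K$ and every projective $P$. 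Combining (i) with these two choices of $\mathcal{L}$ identifies both $\mathfrak{FF}\mbox{-}{\tt PGF}(R)$ and $\mathfrak{PGF}^\perp\mbox{-}{\tt PGF}(R)$ with the stable category of ${\tt PGF}(R)$, which proves (iii).

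The only place requiring an honest verification is the last hypothesis in (iii) — that projectives lie in ${\tt PGF}(R)^\perp$ — and I expect this (very minor) step to be the main obstacle, since it is the one point where the Gorenstein-projectivity content of \cite[Theorem~4.4]{S-S} is invoked rather than pure cotorsion-pair formalism.
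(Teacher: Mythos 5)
Your argument for (i) and (ii) is exactly the paper's: lift the map $K\to\Lambda$ through a projective cover-by-surjection $P\twoheadrightarrow\Lambda$ using $\mathrm{Ext}^1_R(K,\Lambda')=0$ for $\Lambda'=\ker\pi\in\mathcal{L}$, then specialize via Remark~\ref{remarkaki}. Those parts are fine.

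In (iii), however, there is a genuine misstep. You claim that heredity of the cotorsion pair $({\tt PGF}(R),{\tt PGF}(R)^\perp)$ yields that ${\tt PGF}(R)^\perp$ is closed under kernels of epimorphisms. That is backwards: for a hereditary cotorsion pair $(\mathcal{K},\mathcal{L})$ it is the \emph{left} class $\mathcal{K}$ that is closed under kernels of epimorphisms, while the right class $\mathcal{L}$ is closed under \emph{cokernels of monomorphisms} (as the paper's preliminaries record). Heredity alone does not give what you need: for the complete hereditary cotorsion pair $\bigl(\mathbb{Z}\text{-Mod},{\tt Inj}(\mathbb{Z})\bigr)$, the sequence $0\to\mathbb{Z}\to\mathbb{Q}\to\mathbb{Q}/\mathbb{Z}\to 0$ exhibits a kernel of an epimorphism of injectives that is not injective. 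The closure of ${\tt PGF}(R)^\perp$ under kernels of epimorphisms is true, but it is a nontrivial input: it follows from the fact that ${\tt PGF}(R)^\perp$ is \emph{thick}, which is part of \cite[Theorem~4.9]{S-S} and is what the paper invokes at this point. So the step you flagged as the only one needing ``honest verification'' (${\tt Proj}(R)\subseteq{\tt PGF}(R)^\perp$, which you handle correctly via ${\tt PGF}(R)\subseteq{\tt GProj}(R)$) is not the delicate one; the kernels-of-epimorphisms hypothesis is, and your justification of it fails. Replacing the appeal to heredity by a citation of the thickness of ${\tt PGF}(R)^\perp$ repairs the proof and makes it coincide with the paper's.
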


\begin{proof}(i) It suffices to prove that if $f:K\rightarrow K'$ is an $R$-linear map which factors through a module in $\mathcal{L}$, then $f$ factors through a projective module for every $K,K'\in\mathcal{K}$. It suffices to show that for every $L\in\mathcal{L}$ every $R$-linear map $g: K\rightarrow L$ factors through a projective module. As in the proof of Lemma \ref{2.1}(ii), we construct an exact sequence of $R$-modules of the form
 $0 \rightarrow L'\rightarrow P \rightarrow L\rightarrow 0,$ where $P$ is projective and $L'\in\mathcal{L}$. Since $\textrm{Ext}^1_R(K,L')=0$, we obtain the exact sequence $0\rightarrow \textrm{Hom}_R(K,L') \rightarrow \textrm{Hom}_R(K,P) \rightarrow \textrm{Hom}_R(K,L) \rightarrow 0$, as needed.
    
    (ii) This follows from (i) and Remark \ref{remarkaki}.
    
    (iii) By \cite[Theorem 4.9]{S-S} the pair $({\tt PGF}(R),{\tt PGF}(R)^\perp)$ is a complete cotorsion pair with ${\tt PGF}(R)^\perp$ thick (especially, it is closed under kernels of epimorphisms). We also have that every PGF module is Gorenstein projective and ${\tt Proj}(R)\subset{{{\tt PGF}(R)}^\perp}$ by \cite[Proposition 2.3]{H1}. The result now follows from (i) and (ii).
\end{proof}

 Let $(\mathcal{K},\mathcal{L})$ be a complete cotorsion pair such that ${\tt Proj}(R)\subset{\mathcal{L}}$ and the class $\mathcal{L}$ is closed under kernels of epimorphisms. We observe that Lemmas \ref{Scanuel}, \ref{2.1} and Corollary \ref{2.2} imply that there are well-defined additive functors
\[ \mu : \mathfrak{L}\mbox{-}\mbox{R-Mod} \rightarrow
           \mathfrak{L}\mbox{-}{\mathcal{K}}  \]
and
\[ \mu ' : \mathfrak{FF}\mbox{-}\overline{{\tt GFlat}}(R) \rightarrow
           \mathfrak{FF}\mbox{-}{\tt PGF}(R) , \]
where $\mu$  maps a $R\mbox{-module}$ $M$ to a module in $\mathcal{K}$ that appears in an approximation sequence of $M$ and $\mu '$ maps a module $M$ of finite Gorenstein flat dimension to the PGF module which appears in the exact sequence (5) of the Introduction. Moreover, $\mu$ maps the class $[f]\in \mathfrak{L}\mbox{-}\mbox{Hom}_R(M,N)$ of an $R$-linear map $f: M \rightarrow N$ to the class $[g]\in \mathfrak{L}\mbox{-}\mbox{Hom}_{R}(K,K')$ of any $R$-linear map $g: K \rightarrow K'$ which satisfies the condition described in Lemma \ref{2.1}(i), while $\mu '$ maps the class $[f]\in \mathfrak{FF}\mbox{-}\mbox{Hom}_R(M,N)$ of an $R$-linear map $f: M \rightarrow N$ to the class $[g]\in \mathfrak{FF}\mbox{-}\mbox{Hom}_{R}(L,L')$ of any $R$-linear map $g: L \rightarrow L'$ which satisfies the condition described in Lemma \ref{2.2}(i). 

The main result of this section is stated below.

\begin{Theorem}\label{thr1}

(i) The additive functor
$ \mu : \mathfrak{L}\mbox{-}\mbox{R-Mod} \rightarrow
           \mathfrak{L}\mbox{-}{\mathcal{K}}$
 is right adjoint to the inclusion functor
$\mathfrak{L}\mbox{-}{\mathcal{K}} \hookrightarrow
 \mathfrak{L}\mbox{-}\mbox{R-Mod}$.
 
 (ii) The additive functor
$ \mu ' : \mathfrak{FF}\mbox{-}\overline{{\tt GFlat}}(R) \rightarrow
           \mathfrak{FF}\mbox{-}{\tt PGF}(R)$
 is right adjoint to the inclusion functor
$\mathfrak{FF}\mbox{-}{{\tt PGF}(R)} \hookrightarrow
 \mathfrak{FF}\mbox{-}\overline{{\tt GFlat}}(R)$.
\end{Theorem}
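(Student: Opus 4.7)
The plan for part (i) is to verify that $\mu$ is a well-defined additive functor, exhibit the counit of the proposed adjunction as the class of the projection in the defining approximation sequence, and then convert the content of Lemma \ref{2.1} into the required hom-set bijection. For well-definedness of $\mu$ on objects, two approximation sequences $0 \to L \to K \to M \to 0$ and $0 \to L' \to K' \to M \to 0$ satisfy $\mathrm{Ext}^1_R(K, L') = 0 = \mathrm{Ext}^1_R(K', L)$ because $\mathcal{K} \subseteq {}^{\perp}\mathcal{L}$, so Lemma \ref{Scanuel}(i) yields an isomorphism $K \oplus L' \simeq K' \oplus L$ which identifies $K$ and $K'$ in $\mathfrak{L}\mbox{-}\mathcal{K}$. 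On morphisms, Lemma \ref{2.1}(i)--(ii) provide existence and uniqueness of a lift $g : K \to K'$ of any $f : M \to N$ modulo maps factoring through $\mathcal{L}$, and Lemma \ref{2.1}(iii) shows that if $[f]_{\mathfrak{L}} = 0$ then $[g]_{\mathfrak{L}} = 0$, so the assignment descends to $\mathfrak{L}$-classes. Functoriality and additivity are then automatic from the biadditive nature of composition on $\mathfrak{L}$-hom groups together with the uniqueness statement.

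For the adjunction, I would define the counit $\epsilon_M : \mu(M) \to M$ as the class $[p]_{\mathfrak{L}}$ of the projection $p : \mu(M) \to M$ in the approximation sequence of $M$; naturality of $\epsilon$ in $M$ is immediate from the defining identity $qg = fp$ of the lift. One then shows that composition with $\epsilon_M$ induces, for every $K \in \mathcal{K}$ and every $R$-module $M$, a bijection
$$\Phi_{K,M} : \mathfrak{L}\mbox{-}\mathrm{Hom}_R(K, \mu(M)) \longrightarrow \mathfrak{L}\mbox{-}\mathrm{Hom}_R(K, M), \qquad [g]_{\mathfrak{L}} \longmapsto [p \circ g]_{\mathfrak{L}},$$
which is precisely the hom-set isomorphism witnessing the adjunction. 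Using the tautological approximation $0 \to 0 \to K \xrightarrow{\mathrm{id}} K \to 0$ of $K \in \mathcal{K}$, surjectivity of $\Phi_{K,M}$ follows from Lemma \ref{2.1}(i) (every $f : K \to M$ admits a lift through $p$), while injectivity follows from Lemma \ref{2.1}(iii) applied to $f = p \circ g$ (if $[f]_{\mathfrak{L}} = 0$, then $[g]_{\mathfrak{L}} = 0$). Naturality of $\Phi$ in both arguments is then formal, using the corresponding lifts.

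Part (ii) is obtained by specialization. By Remark \ref{remarkaki}, the pair $(\mathcal{K}, \mathcal{L}) = ({\tt PGF}(R), \overline{{\tt Flat}}(R))$ satisfies the hypotheses of part (i), while the approximation sequences (\ref{eq5}) of Emmanouil supply the required input for every $M$ of finite Gorenstein flat dimension; the functor $\mu'$ is then the restriction of the resulting $\mu$ to $\mathfrak{FF}\mbox{-}\overline{{\tt GFlat}}(R)$, and its target indeed lies in $\mathfrak{FF}\mbox{-}{\tt PGF}(R) \subseteq \mathfrak{FF}\mbox{-}\overline{{\tt GFlat}}(R)$ since PGF modules are Gorenstein flat. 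The adjunction then follows by the same argument, invoking Lemma \ref{2.2} in place of Lemma \ref{2.1}. The main technical step throughout is the systematic application of Lemma \ref{2.1}(iii): without it neither the descent of $\mu$ to $\mathfrak{L}$-morphisms nor the injectivity of $\Phi$ can be established, while all other pieces are routine category theory once the lemmas are in place.
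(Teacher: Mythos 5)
Your proposal is correct and follows essentially the same route as the paper: the adjunction is verified by showing that composition with the class of the deflation $q:K\to M$ induces a bijection on $\mathfrak{L}$-Hom groups, with surjectivity coming from $\mathrm{Ext}^1_R(\Lambda,L)=0$ (equivalently Lemma \ref{2.1}(i)) and injectivity from Lemma \ref{2.1}(iii) applied to the tautological sequence $0\to 0\to \Lambda\to\Lambda\to 0$. The only cosmetic difference is that you spell out the well-definedness of $\mu$ and the counit explicitly, which the paper disposes of in the paragraph preceding the theorem.
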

\begin{proof} We prove only (i) as the proof of (ii) is essentially the same.
Let $N$ be any module and
\[ 0 \rightarrow L \stackrel{\jmath}{\rightarrow} K
     \stackrel{q}{\rightarrow} N\rightarrow 0 , \]
 be an approximation sequence of $N$, where $K\in\mathcal{K}$ and $L\in\mathcal{L}$. We note that the additive map
\[
 [q]_{*} : \mathfrak{L}\mbox{-}\mbox{Hom}_{R}(\Lambda,K) \rightarrow
           \mathfrak{L}\mbox{-}\mbox{Hom}_{R}(\Lambda,N) \]
is natural in both $\Lambda\in\mathcal{K}$ (this is straightforward) and $N$ (this follows
from Lemma \ref{2.1}). We shall establish the adjunction in the statement of
the theorem, by proving that the additive map $[q]_{*}$ above is bijective.

The additive map
\[ q_{*} : \mbox{Hom}_{R}(\Lambda,K) \rightarrow \mbox{Hom}_{R}(\Lambda,N) \]
is surjective because $\mbox{Ext}_{R}^{1} \! \left(\Lambda,L \right)=0$ , whence we get the surjectivity of $[q]_{*}$. As far as the injectivity
of $[q]_{*}$ is concerned, assume that $g : \Lambda \rightarrow K$ is an $R$-linear
map, such that
$[qg] = [q][g] = [q]_{*}[g] = [0] \in \mathfrak{L}\mbox{-}\mbox{Hom}_{R}(\Lambda,N)$.
Then, we may consider the commutative diagram
\[
\begin{array}{ccccccccc}
 0 & \rightarrow & 0 & \rightarrow & \Lambda & = & \Lambda & \rightarrow
   & 0 \\
 & & & & \!\!\! {\scriptstyle{g}} \downarrow & &
         \!\!\!\!\! {\scriptstyle{qg}} \downarrow
 & & \\
 0 & \rightarrow & L & \stackrel{\jmath}{\rightarrow} & K
   & \stackrel{q}{\rightarrow} & N & \rightarrow & 0
\end{array}
\]
and invoke Lemma \ref{2.1}(iii), in order to conclude that
$[g] = [0] \in \mathfrak{L}\mbox{-}\mbox{Hom}_{R}(\Lambda,K)$.\end{proof}
\begin{Corollary}\label{cor27}
	Let $(\mathcal{K},\mathcal{L})$ be a complete hereditary cotorsion pair such that ${\tt Proj}(R)\subset{\mathcal{L}}$ and the class $\mathcal{L}$ is closed under kernels of epimorphisms.
	Then the following conditions are equivalent for an $R$-module $M$.
	\begin{itemize}
	\item[(i)] $M\in\mathcal{L}$,
	
	\item[(ii)] $\mathfrak{L}\mbox{-}\mbox{Hom}_{R}(K,M) = 0$ for all $K\in\mathcal{K}$ and
	
	\item[(iii)] there exists a short exact sequence of $R$-modules
	$0 \rightarrow L \rightarrow K \stackrel{q}{\rightarrow}
	M \rightarrow 0$,
	where $L\in\mathcal{L}$ and $K\in\mathcal{K}$, such
	that $[q] = [0] \in \mathfrak{L}\mbox{-}\mbox{Hom}_{R}(K,M)$.
	\end{itemize}
\end{Corollary}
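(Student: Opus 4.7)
The plan is to establish the cycle of implications $(\mathrm{i}) \Rightarrow (\mathrm{ii}) \Rightarrow (\mathrm{iii}) \Rightarrow (\mathrm{i})$. The first two implications are routine: for $(\mathrm{i}) \Rightarrow (\mathrm{ii})$ I would observe that if $M \in \mathcal{L}$, then every $R$-linear map $f : K \to M$ factors as $K \stackrel{f}{\to} M \stackrel{\mathrm{id}_M}{\to} M$ through the object $M \in \mathcal{L}$, so $[f] = [0]$ in $\mathfrak{L}\mbox{-}\textrm{Hom}_R(K,M)$; for $(\mathrm{ii}) \Rightarrow (\mathrm{iii})$ I would invoke the completeness of the cotorsion pair $(\mathcal{K},\mathcal{L})$ to produce an approximation sequence $0 \to L \to K \stackrel{q}{\to} M \to 0$ with $K \in \mathcal{K}$ and $L \in \mathcal{L}$, and note that $[q] = [0]$ holds by hypothesis $(\mathrm{ii})$.

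For the substantive implication $(\mathrm{iii}) \Rightarrow (\mathrm{i})$, the idea is to propagate the vanishing of $[q]$ to the identity map of $K$ by means of Lemma \ref{2.1}(iii). I would apply that lemma with $f = q : K \to M$, using the trivial approximation sequence $0 \to 0 \to K \stackrel{\mathrm{id}_K}{\to} K \to 0$ for the source $K$ (valid because $K \in \mathcal{K}$ and $0 \in \mathcal{L}$), and the given sequence $0 \to L \to K \stackrel{q}{\to} M \to 0$ for the target $M$. Taking the auxiliary map $g = \mathrm{id}_K$ satisfies $qg = q = q \circ \mathrm{id}_K$, so Lemma \ref{2.1}(iii) together with $[q] = [0]$ forces $[\mathrm{id}_K] = [0]$ in $\mathfrak{L}\mbox{-}\textrm{Hom}_R(K,K)$. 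Hence $\mathrm{id}_K$ factors as $K \stackrel{a}{\to} \Lambda \stackrel{b}{\to} K$ for some $\Lambda \in \mathcal{L}$, exhibiting $K$ as a direct summand of $\Lambda$. Since $\mathcal{L} = \mathcal{K}^{\perp}$ is closed under direct summands, we conclude $K \in \mathcal{L}$. Finally, the sequence $0 \to L \to K \to M \to 0$ has both $L$ and $K$ in $\mathcal{L}$, and since the pair is hereditary, $\mathcal{L}$ is closed under cokernels of monomorphisms, so $M \in \mathcal{L}$.

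The main obstacle is spotting the trick in the last implication: one has to realise that feeding $g = \mathrm{id}_K$ into Lemma \ref{2.1}(iii) upgrades the $\mathcal{L}$-triviality of the specific map $q$ to the $\mathcal{L}$-triviality of $\mathrm{id}_K$, which in turn forces $K$ itself into $\mathcal{L}$. Once this manoeuvre is visible, the rest of the argument is just an application of the standard closure properties of a hereditary cotorsion pair.
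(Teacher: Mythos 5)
Your proof is correct and follows essentially the same route as the paper: the key step in both is showing $[1_K]=[0]$, so that $K$ is a direct summand of a module in $\mathcal{L}$, hence lies in $\mathcal{L}$, after which closure of $\mathcal{L}$ under cokernels of monomorphisms (heredity) gives $M\in\mathcal{L}$. The only cosmetic difference is that the paper deduces $[1_K]=[0]$ from the injectivity of $[q]_*$ supplied by the adjunction of Theorem \ref{thr1}(i), whereas you inline that argument by applying Lemma \ref{2.1}(iii) directly to the trivial sequence $0\to 0\to K\to K\to 0$ — which is exactly how the paper proves that injectivity in the first place.
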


\begin{proof}
	The implications (i)$\Rightarrow$(ii) and (ii)$\Rightarrow$(iii) are clear
	and hence it suffices to show that (iii)$\Rightarrow$(i). To that end, we
	note that the adjunction isomorphism established by Theorem \ref{thr1}(i)
	implies, in particular, that the additive map
	\[ [q]_{*} : \mathfrak{L}\mbox{-}\mbox{Hom}_{R}(K,K) \rightarrow
	\mathfrak{L}\mbox{-}\mbox{Hom}_{R}(K,M) \]
	is bijective. Since
	$[q]_{*}[1_{K}] = [q][1_{K}] = [q1_{K}] = [q] = [0] \in
	\mathfrak{L}\mbox{-}\mbox{Hom}_{R}(K,M)$,
	we conclude that $[1_{K}] = [0] \in \mathfrak{L}\mbox{-}\mbox{Hom}_{R}(K,K)$;
	hence, the identity map of $K$ factors through a module $L'\in \mathcal{L}$. Since $\mathcal{L}$ is closed under direct summands and $K$ is a direct summand of $L'$ we obtain that $K$ belongs to $\mathcal{L}$ as well. Moreover, the class $\mathcal{L}$ is closed under cokernels of monomorphisms, and hence the short exact sequence $0 \rightarrow L \rightarrow K \rightarrow M \rightarrow 0$ yields $M\in \mathcal{L}$.
\end{proof}

\begin{Corollary}\label{cor26}The following conditions are equivalent for an $R$-module $M$ of finite Gorenstein flat dimension.
\begin{itemize}
\item[(i)] $M$ is an $R$-module of finite flat dimension,

\item[(ii)] $\mathfrak{FF}\mbox{-}\mbox{Hom}_{R}(L,M) = 0$ for every PGF $R$-module $L$ and

\item[(iii)] there exists a short exact sequence of $R$-modules
      $0 \rightarrow F \rightarrow L \stackrel{q}{\rightarrow}
       M \rightarrow 0$,
      where $F$ is a module of finite flat dimension and $L$ is PGF, such
      that $[q] = [0] \in \mathfrak{FF}\mbox{-}\mbox{Hom}_{R}(L,M)$.
\end{itemize}
\end{Corollary}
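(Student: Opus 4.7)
The strategy is to follow the same template as Corollary \ref{cor27}, substituting Theorem \ref{thr1}(ii) for Theorem \ref{thr1}(i) and invoking Lemma \ref{lemataki} as the additional ingredient needed at the end.

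The implications (i) $\Rightarrow$ (ii) and (ii) $\Rightarrow$ (iii) are straightforward. For the first, if $\mbox{fd}_R M < \infty$ then any $R$-linear map $f : L \rightarrow M$ factors through $M$ itself, which has finite flat dimension, and hence $[f] = [0]$ in $\mathfrak{FF}\mbox{-}\mbox{Hom}_R(L,M)$. For the second, the approximation sequence (5) of the Introduction produces a short exact sequence $0 \rightarrow F \rightarrow L \stackrel{q}{\rightarrow} M \rightarrow 0$ of the required form; condition (ii), applied to the PGF module $L$ appearing there, then forces $[q] = [0]$.

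The essential step is (iii) $\Rightarrow$ (i). I would apply the adjunction isomorphism of Theorem \ref{thr1}(ii) with the PGF module $L$ from (iii) itself placed in the test position. The resulting bijection
\[ [q]_{*} : \mathfrak{FF}\mbox{-}\mbox{Hom}_R(L,L) \rightarrow \mathfrak{FF}\mbox{-}\mbox{Hom}_R(L,M) \]
sends $[1_L]$ to $[q \cdot 1_L] = [q] = [0]$, and hence $[1_L] = [0] \in \mathfrak{FF}\mbox{-}\mbox{Hom}_R(L,L)$. Therefore $1_L$ factors through some $R$-module of finite flat dimension, so $L$ is a direct summand of a module of finite flat dimension, whence $\mbox{fd}_R L < \infty$. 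Since $L$ is PGF it is Gorenstein flat, so Lemma \ref{lemataki} upgrades this to the statement that $L$ is actually flat. Finally, the short exact sequence $0 \rightarrow F \rightarrow L \rightarrow M \rightarrow 0$ has flat middle term and kernel of finite flat dimension, so the long exact sequence of $\mbox{Tor}$ yields $\mbox{fd}_R M \leq \mbox{fd}_R F + 1 < \infty$.

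The only step that goes beyond a formal translation of Corollary \ref{cor27} is the passage from \emph{$L$ has finite flat dimension} to \emph{$L$ is flat}, and this is precisely where Lemma \ref{lemataki} is indispensable: the class $\overline{{\tt Flat}}(R)$ is not the right-hand class of a cotorsion pair for which the generic closure statements of Corollary \ref{cor27} were proved, so the direct summand argument alone does not suffice to close the circle, and the Gorenstein-flat-plus-finite-flat-dimension implication is what bridges the gap.
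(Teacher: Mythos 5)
Your argument is correct and follows essentially the same route as the paper's own proof: the adjunction of Theorem \ref{thr1}(ii) forces $[1_L]=[0]$, so $L$ is a direct summand of a module of finite flat dimension, Lemma \ref{lemataki} upgrades $L$ to flat, and the short exact sequence then bounds $\mbox{fd}_RM$. Your closing remark correctly identifies Lemma \ref{lemataki} as the one ingredient beyond the formal template of Corollary \ref{cor27}.
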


\begin{proof}
The implications (i)$\Rightarrow$(ii) and (ii)$\Rightarrow$(iii) are clear
and hence it suffices to show that (iii)$\Rightarrow$(i). To that end, we
note that the adjunction isomorphism established by Theorem \ref{thr1}(ii)
implies, in particular, that the additive map
\[ [q]_{*} : \mathfrak{FF}\mbox{-}\mbox{Hom}_{R}(L,L) \rightarrow
             \mathfrak{FF}\mbox{-}\mbox{Hom}_{R}(L,M) \]
is bijective. Since
$[q]_{*}[1_{L}] = [q][1_{L}] = [q1_{L}] = [q] = [0] \in
 \mathfrak{FF}\mbox{-}\mbox{Hom}_{R}(L,M)$,
we conclude that $[1_{L}] = [0] \in \mathfrak{FF}\mbox{-}\mbox{Hom}_{R}(L,L)$;
hence, the identity map of $L$ factors through a module $F'$ of finite flat dimension. It follows that $L$ is a direct summand of $F'$ and hence $L$ has finite flat dimension. Since $L$ is PGF and hence Gorenstein flat, Lemma \ref{lemataki} implies that the $R$-module $L$ is flat. Using the short exact sequence $0 \rightarrow F \rightarrow L \rightarrow M \rightarrow 0$ we obtain that $\textrm{fd}_R M<\infty$.
\end{proof}

We consider now the class of flat modules ${\tt Flat}(R)$ and we note that the set consisting of those $R$-linear maps
$f : M \rightarrow N$ that factor through a flat module is a subgroup of the abelian group $\mbox{Hom}_{R}(M,N)$. We denote by $\mathfrak{F}\mbox{-}\mbox{Hom}_{R}(M,N)$ the corresponding quotient
group and by $[f]_{\mathfrak{F}}$ the class of any $f \in \mbox{Hom}_{R}(M,N)$. Moreover, we denote by $\mathfrak{F}\mbox{-}R\textrm{-Mod}$ the category whose objects are all $R$-modules and whose morphism sets are given by the abelian groups $\mathfrak{F}\mbox{-}\mbox{Hom}_{R}(M,N)$. Using the approximation sequences (\ref{eq3}),(\ref{eq4}) and similar arguments the following result is obtained immediately. Although, we present an alternative proof. 

\begin{Proposition}\label{cor28}The following conditions are equivalent for a Gorenstein flat $R$-module $M$.
\begin{itemize}
	\item[(i)]$M$ is flat,
	\item[(ii)]$\mathfrak{F}\mbox{-}\mbox{Hom}_{R}(L,M) = 0$ for every PGF $R$-module $L$ and
	\item[(iii)] there exists a short exact sequence of $R$-modules
	$0 \rightarrow F \rightarrow L \stackrel{q}{\rightarrow}
	M \rightarrow 0$,
	where $F$ is flat and $L$ is PGF, such
	that $[q] = [0] \in \mathfrak{F}\mbox{-}\mbox{Hom}_{R}(L,M)$.
\end{itemize}
\end{Proposition}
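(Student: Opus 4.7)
The plan is to prove the implications (i)$\Rightarrow$(ii)$\Rightarrow$(iii)$\Rightarrow$(i). The first two will be immediate. For (i)$\Rightarrow$(ii), given $M$ flat, any $R$-linear map $f:L\to M$ factors as $L\stackrel{f}{\rightarrow} M\stackrel{1_M}{\rightarrow} M$ through the flat module $M$ itself, so $[f]_{\mathfrak{F}}=0$. For (ii)$\Rightarrow$(iii), the Saroch-Stovicek approximation sequence (\ref{eq3}) applied to the Gorenstein flat module $M$ yields $0\to F\to L\stackrel{q}{\rightarrow} M\to 0$ with $F$ flat and $L$ PGF, and the assumption (ii) at once gives $[q]_{\mathfrak{F}}=0$.

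The substance is (iii)$\Rightarrow$(i). I would argue directly rather than via the adjunction of Theorem \ref{thr1}, since Lemma \ref{2.1} requires the left class to be closed under kernels of epimorphisms and this property fails for ${\tt Flat}(R)$. Starting from the hypothesis $[q]_{\mathfrak{F}}=[0]$, I write $q=\beta\alpha$ with $\alpha:L\to F'$ and $\beta:F'\to M$ for some flat $F'$; since $q$ is surjective, so is $\beta$. Forming the pullback $P$ of $q$ and $\beta$ produces two short exact sequences
\[ 0\to F\to P\to F'\to 0 \qquad\text{and}\qquad 0\to \ker\beta\to P\to L\to 0. \]
Because $F$ and $F'$ are flat, the first sequence shows that $P$ is flat. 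The assignment $s(l)=(l,\alpha(l))$ defines a section of the second sequence, precisely because $\beta\alpha=q$, so $L$ is a direct summand of the flat module $P$ and is therefore itself flat.

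Once $L$ is known to be flat, the original sequence $0\to F\to L\to M\to 0$ has both outer terms flat, so $\textrm{fd}_R M<\infty$; combined with the hypothesis that $M$ is Gorenstein flat, Lemma \ref{lemataki} forces $M$ to be flat, completing the cycle. The only genuinely new ingredient is the pullback maneuver, which converts the stable factorization of the epimorphism $q$ into an honest split short exact sequence. I expect spotting this pullback step (and in particular noticing that surjectivity of $q$ upgrades $\beta$ to an epimorphism, which is what makes the pullback behave well) to be the main obstacle; everything else is routine bookkeeping with the approximation sequence and Lemma \ref{lemataki}.
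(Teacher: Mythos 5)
Your proof is correct and follows essentially the same route as the paper's: both reduce (iii)$\Rightarrow$(i) to exhibiting $L$ as a direct summand of a flat pullback module (an extension of $F'$ by $F$) and then conclude via Lemma \ref{lemataki} applied to the sequence $0\to F\to L\to M\to 0$. The only difference is that the paper forms the pullback along a special flat precover of $M$ and uses the precover property to split the map onto $L$, whereas you pull back directly along $\beta:F'\to M$ from the factorization of $q$ and write down the section $l\mapsto(l,\alpha(l))$ explicitly, which slightly streamlines the argument by avoiding any appeal to the existence of special flat precovers.
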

\begin{proof} Since the implications $(i)\Rightarrow(ii)$ and $(ii)\Rightarrow(iii)$ are clear, it suffices to show that $(iii)\Rightarrow(i)$. We consider a special flat precover $(F',f)$ of $M$ and the following pull-back diagram: 
\[
\begin{array}{ccccccccc}
	& & & & 0 & & 0 & &\\
	& & & & \uparrow & & \uparrow & &\\
	0&\rightarrow & F &\rightarrow & L &\xrightarrow{q} & M & \rightarrow &0\\
	& & \parallel & & \!\!\!\! {\scriptstyle{\pi}}\uparrow & & \uparrow {\scriptstyle{f}}\!\!\!& &\\
	0&\rightarrow & F &\rightarrow & B &\rightarrow & F' & \rightarrow &0\\
	& & & & \uparrow & & \uparrow & &\\
	& & & & C & = & C & &\\
	& & & & \uparrow & & \uparrow & &\\
	& & & & 0 & & 0 & &
\end{array}
\]

\noindent Since $q$ factors through a flat module we can write $q$ as the composition $L\rightarrow \Lambda\rightarrow M$ where $\Lambda$ is a flat module. Thus, $q$ factors through the special flat precover $(F',f)$ as a composition of the form \[L\rightarrow\Lambda\rightarrow F'\xrightarrow{f} M.\] The universal property of the pull-back yields the existence of a morphism $\Lambda\rightarrow B$ and hence $\pi$ is $R$-split. Moreover, the $R$-module $L$ is flat as direct summand of the flat $R$-module $B$, implying that $M$ is a Gorenstein flat $R$-module. We conclude that the $R$-module M is flat (see Lemma \ref{lemataki}).  
\end{proof}

\begin{Remark}\rm The proof of Proposition 2.8 shows that if an epimorphism with a flat kernel factors through a flat module, then the flat dimension of its cokernel is at most one.\end{Remark}

\section{Stabilizing with respect to PGF modules}
Given two modules $M,N$, the set consisting of those $R$-linear maps
$f : M \rightarrow N$ that factor through a PGF module
is a subgroup of the abelian group $\mbox{Hom}_{R}(M,N)$. We shall denote by $\mathfrak{PGF}\mbox{-}\mbox{Hom}_{R}(M,N)$ the
corresponding quotient group and let $[f] = [f]_{\mathfrak{PGF}}$ be the class
of any $R$-linear map $f \in \mbox{Hom}_{R}(M,N)$ therein. We note that for any 
three modules $M,N$ and $L$ the composition of $R$-linear maps
induces a well-defined biadditive map
\[ \mathfrak{PGF}\mbox{-}\mbox{Hom}_{R}(N,L) \times
   \mathfrak{PGF}\mbox{-}\mbox{Hom}_{R}(M,N)
   \rightarrow \mathfrak{PGF}\mbox{-}\mbox{Hom}_{R}(M,L) . \]
We shall denote by $\mathfrak{PGF}\mbox{-}R\mbox{-Mod}$ the category whose objects
are all modules and whose morphism sets are given by the abelian groups
$\mathfrak{PGF}\mbox{-}\mbox{Hom}_{R}(M,N)$ (with composition of morphisms induced
by the composition of $R$-linear maps). Then, Lemma \ref{Scanuel} implies that the module
$F'$, which is defined for every module $M$ of finite Gorenstein flat dimension by (\ref{eq6}), is uniquely determined up to isomorphism as an object of the category $\mathfrak{PGF}\mbox{-}R\mbox{-Mod}$.

\begin{Lemma}\label{lem31}
Let $f : M \rightarrow N$ be an $R$-linear map, where $M,N$ are two modules of finite Gorenstein flat dimension. We also consider two exact sequences
of $R$-modules of the form
\[ 0 \rightarrow M \stackrel{\iota}{\rightarrow} F
     \stackrel{p}{\rightarrow} L \rightarrow 0
   \;\;\; \mbox{and} \;\;\;
   0 \rightarrow N \stackrel{\jmath}{\rightarrow} F'
     \stackrel{q}{\rightarrow} L' \rightarrow 0 , \]
where $F,F'$ have finite flat dimension and $L,L'$ are PGF. Then,
\begin{itemize}
\item[(i)] There exists an $R$-linear map $g : F \rightarrow F'$, such that
$g \iota = \jmath f$.

\item[(ii)] If $g,g' : F \rightarrow F'$ are two $R$-linear maps with
$g \iota = \jmath f$ and $g' \iota = \jmath f$, then
$[g] = [g'] \in \mathfrak{PGF}\mbox{-}\mbox{Hom}_{R}(F,F')$.

\item[(iii)] If $[f] = [0] \in \mathfrak{PGF}\mbox{-}\mbox{Hom}_{R}(M,N)$ and
$g : F \rightarrow F'$ is an $R$-linear map such that
$g \iota = \jmath f$, then $[g] = [0] \in \mathfrak{PGF}\mbox{-}\mbox{Hom}_{R}(F,F')$.
\end{itemize}
\end{Lemma}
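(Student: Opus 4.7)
The plan is to mirror the proof of Lemma \ref{2.1}, with the class ${\tt PGF}(R)$ playing the role of $\mathcal{K}$ and the class $\overline{{\tt Flat}}(R)$ of modules of finite flat dimension playing the role of $\mathcal{L}$. The decisive input is Remark \ref{remarkaki}: it supplies the vanishing $\mbox{Ext}_{R}^{1}(L'',F'')=0$ for every PGF module $L''$ and every module $F''$ of finite flat dimension, which replaces the hypothesis ${\mathcal{K}\subseteq {}^\perp\mathcal{L}}$ used in Lemma \ref{2.1}.

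For (i), applying $\mbox{Hom}_{R}(-,F')$ to $0\to M\to F\to L\to 0$ and invoking $\mbox{Ext}_{R}^{1}(L,F')=0$ (Remark \ref{remarkaki}) shows that the restriction map $\iota^{*}:\mbox{Hom}_{R}(F,F')\to\mbox{Hom}_{R}(M,F')$ is surjective, and so $\jmath f$ lifts to some $g:F\to F'$ with $g\iota=\jmath f$. For (ii), given two lifts $g,g'$, the difference $g-g'$ vanishes on $\iota(M)$ and therefore factors through $p$ as $h\circ p$ for some $h:L\to F'$. Since $L$ is PGF, $g-g'$ factors through a PGF module, hence $[g]=[g']\in\mathfrak{PGF}\mbox{-}\mbox{Hom}_{R}(F,F')$.

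For (iii), assume $f$ factors as $M\stackrel{a}{\to}\Lambda\stackrel{b}{\to}N$ with $\Lambda$ PGF. By Proposition \ref{pgf}(iii) there is a short exact sequence $0\to\Lambda\stackrel{\iota'}{\to}P\stackrel{\pi}{\to}G\to 0$ with $P$ projective and $G$ PGF; since projectives have finite flat dimension, this auxiliary sequence has exactly the shape of (\ref{eq6}). Applying (i) twice---once to $a$ with the sequences for $M$ and for $\Lambda$, and once to $b$ with the sequences for $\Lambda$ and for $N$---we obtain maps $\alpha:F\to P$ and $\beta:P\to F'$ satisfying $\alpha\iota=\iota'a$ and $\beta\iota'=\jmath b$. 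Then $(\beta\alpha)\iota=\beta\iota'a=\jmath ba=\jmath f$, so part (ii) yields $[g]=[\beta\alpha]$; since $\beta\alpha$ factors through the projective (hence PGF) module $P$, this gives $[g]=[0]$, as required.

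The argument is essentially routine given the template of Lemma \ref{2.1}; the only mild subtlety is noting that projective modules are themselves PGF (immediate from Proposition \ref{pgf}(iii), taking $G=0$), so that the middle term $P$ of the auxiliary sequence simultaneously has finite flat dimension (making part (i) applicable) and is PGF (making the factorization of $\beta\alpha$ witness vanishing in the quotient group $\mathfrak{PGF}\mbox{-}\mbox{Hom}_{R}(F,F')$).
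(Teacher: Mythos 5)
Your proposal is correct and follows essentially the same route as the paper's proof: part (i) via the vanishing of $\mbox{Ext}_{R}^{1}(L,F')$, part (ii) by factoring the difference of two lifts through the PGF module $L$, and part (iii) by interpolating a sequence $0\to\Lambda\to P\to G\to 0$ with $P$ projective and applying (i) twice. The only cosmetic difference is that you explicitly flag that $P$ is simultaneously of finite flat dimension and PGF, a point the paper leaves implicit.
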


\begin{proof}
(i) Since $\mbox{Ext}_{R}^{1} \! \left( L,F' \right)=0$, the additive map
$\iota^{*} : \mbox{Hom}_{R}(F,F') \rightarrow \mbox{Hom}_{R}(M,F')$
is surjective. Therefore, there exists an $R$-linear map
$g : F \rightarrow F'$ such that $\jmath f = \iota^{*}(g)$, as
needed.

(ii) Let $g,g' : F \rightarrow F'$ be two $R$-linear maps with
$g \iota = \jmath f$ and $g' \iota = \jmath f$.
\[
\begin{array}{ccccccccc}
 0 & \rightarrow & M & \stackrel{\iota}{\rightarrow} & F
   & \stackrel{p}{\rightarrow} & L & \rightarrow & 0 \\
 & & \!\!\! {\scriptstyle{f}} \downarrow & & {\scriptstyle{g}} \downarrow
     \downarrow {\scriptstyle{g'}} & & & & \\
 0 & \rightarrow & N & \stackrel{\jmath}{\rightarrow} & F'
   & \stackrel{q}{\rightarrow} & L' & \rightarrow & 0
\end{array}
\]
Then, $(g'-g)\iota = g'\iota - g\iota = \jmath f - \jmath f = 0$ and hence
there exists an $R$-linear map $h : L \rightarrow F'$, such that
$g'-g = hp$. Since the $R$-module $L$ is PGF, we conclude that
$[g] = [g'] \in \mathfrak{PGF}\mbox{-}\mbox{Hom}_{R}(F,F')$.

(iii) Assume that $f$ factors as the composition of two $R$-linear maps
$M \stackrel{a}{\rightarrow} \Gamma \stackrel{b}{\rightarrow} N$,
where the module $\Gamma$ is PGF. The definition of PGF modules yields the existence of a short exact sequence of $R$-modules of the form
\[ 0 \rightarrow \Gamma \stackrel{k}{\rightarrow} P
     \rightarrow \Gamma' \rightarrow 0, \]
where $P$ is projective and
$\Gamma'$ is PGF (see also Proposition \ref{pgf}). Then, invoking (i) above, we may find $R$-linear
maps $\alpha : F \rightarrow P$ and $\beta : P \rightarrow F'$, such
that $\alpha \iota = ka$ and $\beta k = \jmath b$.
\[
\begin{array}{ccccccccc}
 0 & \rightarrow & M & \stackrel{\iota}{\rightarrow} & F
   & \stackrel{p}{\rightarrow} & L & \rightarrow & 0 \\
 & & \!\!\! {\scriptstyle{a}} \downarrow & &
     \!\!\! {\scriptstyle{\alpha}} \downarrow & & & & \\
 0 & \rightarrow & \Gamma & \stackrel{k}{\rightarrow} & P &
     \rightarrow & \Gamma' & \rightarrow & 0 \\
 & & \!\!\! {\scriptstyle{b}} \downarrow & &
     \!\!\! {\scriptstyle{\beta}} \downarrow & & & & \\
 0 & \rightarrow & N & \stackrel{\jmath}{\rightarrow} & F'
   & \stackrel{q}{\rightarrow} & L' & \rightarrow & 0
\end{array}
\]
We conclude that $(\beta \alpha) \iota = \jmath (ba) = \jmath f$ and hence
for every $R$-linear map $g : F \rightarrow F'$ with $g \iota = \jmath f$
we have $[g] = [\beta \alpha] \in \mathfrak{PGF}\mbox{-}\mbox{Hom}_{R}(F,F')$
(see (ii) above). This finishes the proof, since we obviously have
$[\beta \alpha] = [0] \in \mathfrak{PGF}\mbox{-}\mbox{Hom}_{R}(F,F')$.\end{proof}

In the proof of Lemma \ref{lem31}, the fact that the modules $F,F'$ have finite flat dimension used only in (i) to obtain the vanishing of the group $\textrm{Ext}^1_R(L,F')$. We may relax the condition of the flatness of $F,F'$ as it is stated in the following lemma.

\begin{Lemma}\label{lem32}
Let $f : M \rightarrow N$ be an $R$-linear map, where $M,N$ are two  
$R$-modules and consider two short exact sequences
\[ 0 \rightarrow M \stackrel{\iota}{\rightarrow} K
     \stackrel{p}{\rightarrow}L \rightarrow 0
   \;\;\; \mbox{and} \;\;\;
   0 \rightarrow N \stackrel{\jmath}{\rightarrow} K'
     \stackrel{q}{\rightarrow} L' \rightarrow 0 , \]
where $K,K'\in{\tt PGF}(R)^\perp$ and $L,L'\in{\tt PGF}(R)$. Then,
\begin{itemize}
\item[(i)] There exists an $R$-linear map $g : K \rightarrow K'$, such that
$g \iota = \jmath f$.

\item[(ii)] If $g,g' : K \rightarrow K'$ are two $R$-linear maps with
$g \iota = \jmath f$ and $g' \iota = \jmath f$, then
$[g] = [g'] \in \mathfrak{PGF}\mbox{-}\mbox{Hom}_{R}(K,K')$.

\item[(iii)] If $[f] = [0] \in \mathfrak{PGF}\mbox{-}\mbox{Hom}_{R}(M,N)$ and
$g : K \rightarrow K'$ is an $R$-linear map such that
$g \iota = \jmath f$, then $[g] = [0] \in \mathfrak{PGF}\mbox{-}\mbox{Hom}_{R}(K,K')$.
\end{itemize}
\end{Lemma}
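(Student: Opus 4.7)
The plan is to mimic the proof of Lemma \ref{lem31} almost verbatim, replacing the role of the finite-flat-dimension hypothesis on $F,F'$ with the orthogonality hypothesis on $K,K'$. As the remark preceding the statement points out, the flat-dimension assumption in Lemma \ref{lem31} was used in exactly one place, namely to supply the vanishing $\textrm{Ext}^1_R(L,F')=0$ appearing in part (i); all subsequent appeals to (i) in the proofs of (ii) and (iii) inherit this vanishing. Here $L\in{\tt PGF}(R)$ and $K'\in{\tt PGF}(R)^\perp$, so $\textrm{Ext}^1_R(L,K')=0$ is immediate from the definition of the right orthogonal class.

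For (i), I would apply $\mbox{Hom}_R(-,K')$ to the short exact sequence $0\to M\to K\to L\to 0$; the vanishing above makes the restriction map $\iota^{*}:\mbox{Hom}_R(K,K')\to\mbox{Hom}_R(M,K')$ surjective, and lifting $\jmath f$ produces $g$. For (ii), if $g\iota=g'\iota=\jmath f$ then $(g'-g)\iota=0$, so $g'-g$ factors as $hp$ for some $h:L\to K'$; since $L\in{\tt PGF}(R)$, this witnesses $[g]=[g']$ in $\mathfrak{PGF}\mbox{-}\mbox{Hom}_R(K,K')$. For (iii), I would factor $f$ as $M\stackrel{a}{\to}\Gamma\stackrel{b}{\to}N$ through a PGF module $\Gamma$, invoke Proposition \ref{pgf} to embed $\Gamma$ in a short exact sequence $0\to\Gamma\stackrel{k}{\to} P\to\Gamma'\to 0$ with $P$ projective and $\Gamma'$ PGF, and then use (i) twice to produce $\alpha:K\to P$ with $\alpha\iota=ka$ and $\beta:P\to K'$ with $\beta k=\jmath b$. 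Then $(\beta\alpha)\iota=\jmath f$, so (ii) gives $[g]=[\beta\alpha]$, which is zero since $\beta\alpha$ factors through the PGF module $P$.

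The only point where the relaxed hypothesis requires checking is the double application of (i) in step (iii): we need the Ext vanishings $\textrm{Ext}^1_R(L,P)=0$ and $\textrm{Ext}^1_R(\Gamma',K')=0$. The second is immediate from the definition of ${\tt PGF}(R)^\perp$. The first holds because every PGF module is Gorenstein projective (by \cite[Theorem 4.4]{S-S}) and Gorenstein projective modules are Ext-orthogonal to projectives; thus projective modules belong to ${\tt PGF}(R)^\perp$. This verifies that the argument of Lemma \ref{lem31} transports without further modification, and I expect no serious obstacle beyond this bookkeeping of Ext vanishings.
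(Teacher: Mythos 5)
Your proposal is correct and is exactly the paper's approach: the paper's proof of this lemma reads, in its entirety, ``Same as the proof of Lemma \ref{lem31}.'' Your verification of the two Ext vanishings needed in part (iii) — $\mbox{Ext}^1_R(L,P)=0$ via ${\tt Proj}(R)\subseteq{\tt PGF}(R)^\perp$, and $\mbox{Ext}^1_R(\Gamma',K')=0$ by definition of the orthogonal class — is the only bookkeeping the transfer requires, and you have done it correctly.
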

\begin{proof} Same as the proof of Lemma \ref{lem31}.\end{proof}

As in Section 3 there are well defined additive functors 
\[ \nu : \mathfrak{PGF}\mbox{-}\overline{{\tt GFlat}}(R)\rightarrow
         \mathfrak{PGF}\mbox{-}\overline{{\tt Flat}}(R) , \]
\[  \nu ' : \mathfrak{PGF}\mbox{-}R\mbox{-Mod}\rightarrow
         \mathfrak{PGF}\mbox{-}{\tt PGF}(R)^\perp,\]
where $\nu$ maps an $R$-module $M$ of finite Gorenstein flat dimension to the $R$-module $F'$ of finite flat dimension that appears in the short exact sequence (\ref{eq6}) and $\nu'$ maps an $R$-module $M$ to a module in ${\tt PGF}(R)^\perp$ that appears in an approximation sequence of $M$ of the form
\[ 0 \rightarrow M \rightarrow K
     \rightarrow L \rightarrow 0\]
with $K\in{\tt PGF}(R)^\perp$ and $L\in{\tt PGF}(R)$.
Moreover, $\nu$ maps the class $[f]\in \mathfrak{PGF}\mbox{-}\mbox{Hom}_R(M,N)$ of an $R$-linear map $f: M \rightarrow N$ to the class $[g]\in \mathfrak{PGF}\mbox{-}\mbox{Hom}_{R}(F,F')$ of any $R$-linear map $g: F \rightarrow F'$ which satisfies the condition described in Lemma \ref{lem31}(i), while $\nu '$ maps the class $[f]\in \mathfrak{PGF}\mbox{-}\mbox{Hom}_R(M,N)$ of an $R$-linear map $f: M \rightarrow N$ to the class $[g]\in \mathfrak{PGF}\mbox{-}\mbox{Hom}_{R}(K,K')$ of any $R$-linear map $g: K \rightarrow K'$ which satisfies the condition described in Lemma \ref{lem32}(i). 

The main result of this section is stated below.

\begin{Theorem}\label{theo2}
(i) The additive functor
$\nu : \mathfrak{PGF}\mbox{-}\overline{{\tt GFlat}}(R)\rightarrow
         \mathfrak{PGF}\mbox{-}\overline{{\tt Flat}}(R)$
defined above is left adjoint to the inclusion functor
$\mathfrak{PGF}\mbox{-}\overline{{\tt Flat}}(R) \hookrightarrow \mathfrak{PGF}\mbox{-}\overline{{\tt GFlat}}(R)$.

(ii) The additive functor $ \nu ' : \mathfrak{PGF}\mbox{-}R\mbox{-Mod}\rightarrow\mathfrak{PGF}\mbox{-}{\tt PGF}(R)^\perp$ is left adjoint to the inclusion functor $\mathfrak{PGF}\mbox{-}{\tt PGF}(R)^\perp \hookrightarrow \mathfrak{PGF}\mbox{-}R\mbox{-Mod}$.
\end{Theorem}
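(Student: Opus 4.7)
The plan is to mirror the proof of Theorem \ref{thr1}, but in the dual direction: instead of verifying the universal property of a right adjoint by displaying the counit, I would exhibit the unit of the adjunction and show that precomposition with it induces a bijection on the relevant quotient Hom-groups. Lemma \ref{lem31} (for (i)) and Lemma \ref{lem32} (for (ii)) are the key tools, taking over the role played by Lemma \ref{2.1} in the earlier theorem.

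For part (i), given a module $M$ of finite Gorenstein flat dimension, I would fix an approximation sequence $0 \to M \xrightarrow{\iota} F' \xrightarrow{q} L' \to 0$ as in (\ref{eq6}), so that $F' = \nu(M) \in \overline{{\tt Flat}}(R)$ and $L' \in {\tt PGF}(R)$. The candidate unit is the morphism $[\iota] : M \to \nu(M)$ in $\mathfrak{PGF}\mbox{-}R\mbox{-Mod}$, and the adjunction reduces to showing that for every $\Lambda \in \overline{{\tt Flat}}(R)$ the additive map
\[ [\iota]^{*} : \mathfrak{PGF}\mbox{-}\mbox{Hom}_{R}(F',\Lambda) \longrightarrow \mathfrak{PGF}\mbox{-}\mbox{Hom}_{R}(M,\Lambda) \]
is bijective and natural in both variables. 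Naturality in $\Lambda$ is immediate from the definitions, while naturality in $M$ is a direct consequence of Lemma \ref{lem31}.

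The core observation is that $\Lambda$ itself admits the \emph{trivial} approximation sequence $0 \to \Lambda \xrightarrow{\mathrm{id}} \Lambda \to 0 \to 0$, which is admissible since $\Lambda \in \overline{{\tt Flat}}(R)$ and $0 \in {\tt PGF}(R)$. Surjectivity then follows by applying Lemma \ref{lem31}(i) to a given $f : M \to \Lambda$ with the sequence above for $M$ and the trivial one for $\Lambda$: this yields $g : F' \to \Lambda$ with $g\iota = f$, i.e.\ $[\iota]^{*}[g] = [f]$. Injectivity follows by applying Lemma \ref{lem31}(iii) to the map $g\iota : M \to \Lambda$ using the same two sequences: if $[g\iota] = [0]$, then any lift of $g\iota$ is zero in $\mathfrak{PGF}\mbox{-}\mbox{Hom}_{R}(F',\Lambda)$, and $g$ itself is such a lift, so $[g] = [0]$.

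Part (ii) is entirely analogous, with Lemma \ref{lem32} in place of Lemma \ref{lem31}. The required approximation sequences $0 \to M \to K \to L \to 0$ with $K \in {\tt PGF}(R)^{\perp}$ and $L \in {\tt PGF}(R)$ exist by completeness of the cotorsion pair $({\tt PGF}(R), {\tt PGF}(R)^{\perp})$ (see \cite[Theorem 4.9]{S-S}), and the trivial sequence $0 \to \Lambda \xrightarrow{\mathrm{id}} \Lambda \to 0 \to 0$ is admissible whenever $\Lambda \in {\tt PGF}(R)^{\perp}$, which is exactly what is needed to run the same surjectivity/injectivity arguments. I do not expect real obstacles beyond the bookkeeping: the only point requiring attention is recognising that the trivial approximation sequence for the target $\Lambda$ is what encodes the universal property, at which stage the proof becomes a formal reading-off from the two lemmas.
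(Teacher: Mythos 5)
Your proposal is correct and follows essentially the same route as the paper: the paper also verifies the adjunction by showing that $[\iota]^{*} : \mathfrak{PGF}\mbox{-}\mbox{Hom}_{R}(F',F) \rightarrow \mathfrak{PGF}\mbox{-}\mbox{Hom}_{R}(M,F)$ is natural and bijective, deducing surjectivity from $\mbox{Ext}_{R}^{1}(L,F)=0$ (which is exactly the content of Lemma \ref{lem31}(i) in this situation) and injectivity by comparing with the trivial sequence $0 \rightarrow F \rightarrow F \rightarrow 0 \rightarrow 0$ and invoking Lemma \ref{lem31}(iii), precisely as you describe. Part (ii) is likewise handled in the paper by the same argument with Lemma \ref{lem32}.
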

\begin{proof} We prove only (i) as the proof of (ii) is essentially the same. Let $F$ be an $R$-module of finite flat dimension and $M$ be an $R$-module of finite Gorenstein flat dimension. We also consider a short exact
sequence of $R$-modules
\[ 0 \rightarrow M \stackrel{\iota}{\rightarrow} F'
     \stackrel{p}{\rightarrow} L \rightarrow 0 , \]
where $F'$ has finite flat dimension and $L$ is PGF. We note that the additive map
\[
 [\iota]^{*} : \mathfrak{PGF}\mbox{-}\mbox{Hom}_{R}(F',F) \rightarrow
 \mathfrak{PGF}\mbox{-}\mbox{Hom}_{R}(M,F)
\]
is natural in both $F$ (this is straightforward) and $M$ (this follows from
Lemma 4.1). We shall establish the adjunction in the statement of the theorem,
by proving that the additive map above is bijective. Indeed, since $L$ is a PGF module and $F$ has finite flat dimension, the group
$\mbox{Ext}_{R}^{1} \! \left( L,F \right)$ is trivial. Thus, the additive map
\[ \iota^{*} : \mbox{Hom}_{R}(F',F) \rightarrow \mbox{Hom}_{R}(M,F) \]
is surjective and hence $[\iota]^{*}$ is also surjective. We assume now that $g : F \rightarrow F'$ is an $R$-linear map such that $[g\iota] = [g][\iota] = [\iota]^{*}[g] = [0] \in
 \mathfrak{PGF}\mbox{-}\mbox{Hom}_{R}(M,F)$
and consider the following commutative diagram:
\[
\begin{array}{ccccccccc}
 0 & \rightarrow & M & \stackrel{\iota}{\rightarrow} & F'
   & \stackrel{p}{\rightarrow} & L & \rightarrow & 0 \\
 & & \!\!\!\!\! {\scriptstyle{g \iota}} \downarrow & &
     \!\!\! {\scriptstyle{g}} \downarrow & & & & \\
 0 & \rightarrow & F & = & F & \rightarrow & 0 & \rightarrow & 0
\end{array}
\]
Invoking Lemma 3.1(iii), we obtain that
$[g] = [0] \in \mathfrak{PGF}\mbox{-}\mbox{Hom}_{R}(F',F)$ and hence $[\iota]^{*}$ is injective.\end{proof}

\begin{Corollary}
The following conditions are equivalent for an $R$-module $M$ of finite Gorenstein flat dimension.
\begin{itemize}
\item[(i)] $M$ is PGF,

\item[(ii)] $\mathfrak{PGF}\mbox{-}\mbox{Hom}_{R}(M,F) = 0$ for every $R$-module $F$ of finite flat dimension,

\item[(iii)] there exists a short exact sequence of $R$-modules
      $0 \rightarrow M \stackrel{\iota}{\rightarrow} F
         \rightarrow L \rightarrow 0$,
      where $F$ has finite flat dimension and $L$ is PGF,
      such that $[\iota] = [0] \in \mathfrak{PGF}\mbox{-}\mbox{Hom}_{R}(M,F)$.
\end{itemize}
\end{Corollary}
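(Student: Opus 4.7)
The proof plan follows the pattern of Corollary \ref{cor26} and Corollary \ref{cor27}, using Theorem \ref{theo2}(i) as the key tool. The implications (i)$\Rightarrow$(ii) and (ii)$\Rightarrow$(iii) are the easy ones. For (i)$\Rightarrow$(ii), if $M$ is PGF and $f : M \to F$ is any $R$-linear map with $F$ of finite flat dimension, then $f$ factors as $M \xrightarrow{1_{M}} M \xrightarrow{f} F$ through the PGF module $M$, so $[f] = [0] \in \mathfrak{PGF}\mbox{-}\mbox{Hom}_{R}(M,F)$. For (ii)$\Rightarrow$(iii), we simply invoke the existence of the approximation sequence (\ref{eq6}) for $M$: since the $R$-module $F'$ appearing there has finite flat dimension, hypothesis (ii) forces the class of its monomorphism into $F'$ to vanish in $\mathfrak{PGF}\mbox{-}\mbox{Hom}_{R}(M,F')$.

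The main content is the implication (iii)$\Rightarrow$(i). The plan is to feed $[\iota] = [0]$ into the adjunction isomorphism provided by Theorem \ref{theo2}(i). More precisely, that theorem tells us that the additive map
\[ [\iota]^{*} : \mathfrak{PGF}\mbox{-}\mbox{Hom}_{R}(F,F) \rightarrow \mathfrak{PGF}\mbox{-}\mbox{Hom}_{R}(M,F) \]
is bijective. Applying it to $[1_{F}]$ yields $[\iota]^{*}[1_{F}] = [\iota \cdot 1_{F}] = [\iota] = [0]$, and bijectivity forces $[1_{F}] = [0]$ in $\mathfrak{PGF}\mbox{-}\mbox{Hom}_{R}(F,F)$. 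Hence the identity of $F$ factors through some PGF module $G$, which realizes $F$ as a direct summand of $G$.

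From here I would conclude using the closure properties of PGF modules stated in Section~2: since ${\tt PGF}(R)$ is closed under direct summands, $F$ itself is PGF; then applying closure of ${\tt PGF}(R)$ under kernels of epimorphisms to the short exact sequence $0 \to M \to F \to L \to 0$, in which both $F$ and $L$ now lie in ${\tt PGF}(R)$, gives $M \in {\tt PGF}(R)$. The main (though mild) obstacle is simply organizing the adjunction correctly so that the computation $[\iota]^{*}[1_{F}] = [\iota]$ goes through on the appropriate side; once this is set up, the closure properties of ${\tt PGF}(R)$ finish the argument without any need to invoke Lemma~\ref{lemataki} (in contrast to Corollary~\ref{cor26}, where the analogous step produced a flat middle term).
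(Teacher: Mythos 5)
Your proposal is correct and follows essentially the same route as the paper: the easy implications are handled identically, and for (iii)$\Rightarrow$(i) the paper likewise applies the bijectivity of $[\iota]^{*}$ from Theorem \ref{theo2}(i) to $[1_F]$, concludes that $F$ is a direct summand of a PGF module and hence PGF, and finishes by closure of ${\tt PGF}(R)$ under kernels of epimorphisms. The only quibble is notational: $[\iota]^{*}[1_F]$ is the class of the precomposition $1_F\circ\iota$, so it should be written $[1_F\iota]$ rather than $[\iota\cdot 1_F]$, though of course both equal $[\iota]$.
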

\begin{proof}
The implications (i)$\Rightarrow$(ii) and (ii)$\Rightarrow$(iii) are obvious
and hence it suffices to show that (iii)$\Rightarrow$(i). To that end, we
note that the adjunction isomorphism of the previous Theorem
implies, in particular, that the additive map
\[ [\iota]^{*} : \mathfrak{PGF}\mbox{-}\mbox{Hom}_{R}(F,F) \rightarrow
                 \mathfrak{PGF}\mbox{-}\mbox{Hom}_{R}(M,F) \]
is bijective. Since
$[\iota]^{*}[1_{F}] = [1_{F}][\iota] = [1_{F}\iota] = [\iota] = [0] \in
 \mathfrak{PGF}\mbox{-}\mbox{Hom}_{R}(M,F)$,
we conclude that $[1_{F}] = [0] \in \mathfrak{PGF}\mbox{-}\mbox{Hom}_{R}(F,F)$;
hence, the identity map of $F$ factors through a PGF module
$L$. It follows that $F$ is a direct summand of $L$ and hence $F$ is PGF. Being a kernel of an epimorphism of PGF modules, $N$ is necessarily PGF. \end{proof}

\begin{Corollary}
The following conditions are equivalent for an $R$-module $M$.
\begin{itemize}
\item[(i)] $M$ is PGF,

\item[(ii)] $\mathfrak{PGF}\mbox{-}\mbox{Hom}_{R}(M,K) = 0$ for every $K\in{\tt PGF}(R)^\perp$,

\item[(iii)] there exists a short exact sequence of $R$-modules
    $0 \rightarrow M \stackrel{\iota}{\rightarrow} K
         \rightarrow L \rightarrow 0$,
  where $K\in{\tt PGF}(R)^\perp$ and $L$ is PGF,
 such that $[\iota] = [0] \in \mathfrak{PGF}\mbox{-}\mbox{Hom}_{R}(M,K)$.
\end{itemize}
\end{Corollary}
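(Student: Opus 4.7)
The plan is to imitate almost verbatim the proof of the preceding corollary, substituting the class $\overline{{\tt Flat}}(R)$ by ${\tt PGF}(R)^\perp$ and invoking Theorem \ref{theo2}(ii) in place of Theorem \ref{theo2}(i). The implications (i)$\Rightarrow$(ii)$\Rightarrow$(iii) will be immediate, and essentially all the content lies in (iii)$\Rightarrow$(i).

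For (i)$\Rightarrow$(ii), if $M$ is PGF then the identity on $M$ already factors through a PGF module, so every $R$-linear map $M \rightarrow K$ does, yielding $[f] = [0]$ in $\mathfrak{PGF}\mbox{-}\mbox{Hom}_R(M,K)$. For (ii)$\Rightarrow$(iii), I would invoke \cite[Theorem 4.9]{S-S} to the effect that the cotorsion pair $({\tt PGF}(R),{\tt PGF}(R)^\perp)$ is complete; this supplies a short exact sequence $0 \rightarrow M \stackrel{\iota}{\rightarrow} K \rightarrow L \rightarrow 0$ with $K \in {\tt PGF}(R)^\perp$ and $L \in {\tt PGF}(R)$, and the hypothesis (ii) immediately gives $[\iota] = [0]$.

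The main step is (iii)$\Rightarrow$(i). The idea is to feed the sequence from (iii) into the adjunction of Theorem \ref{theo2}(ii), which guarantees that the additive map
\[ [\iota]^{*} : \mathfrak{PGF}\mbox{-}\mbox{Hom}_{R}(K,K) \rightarrow \mathfrak{PGF}\mbox{-}\mbox{Hom}_{R}(M,K) \]
is bijective. Since $[\iota]^{*}[1_K] = [1_K \iota] = [\iota] = [0]$, injectivity forces $[1_K] = [0]$, so the identity of $K$ factors through some PGF module $G$. Hence $K$ is a direct summand of $G$, and closure of ${\tt PGF}(R)$ under direct summands (recorded in Section 2) yields that $K$ itself is PGF. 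The sequence $0 \rightarrow M \rightarrow K \rightarrow L \rightarrow 0$ then displays $M$ as the kernel of an epimorphism between two PGF modules, and the closure of ${\tt PGF}(R)$ under kernels of epimorphisms delivers that $M$ is PGF.

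No real obstacle is anticipated: the argument is a direct formal analogue of the two preceding corollaries, the only substantive ingredients being completeness of the cotorsion pair $({\tt PGF}(R),{\tt PGF}(R)^\perp)$, the adjunction of Theorem \ref{theo2}(ii), and the standard closure properties of ${\tt PGF}(R)$ recalled in Section 2. The one step that merits care is checking that $K \in {\tt PGF}(R)^\perp$ is indeed the correct target class in which the adjunction of Theorem \ref{theo2}(ii) makes $[\iota]^{*}$ bijective, but this is automatic from the construction of $\nu'$.
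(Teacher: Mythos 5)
Your proposal is correct and follows exactly the route the paper takes: the paper's own proof of this corollary is literally ``Same as the proof of Corollary 4.4,'' and your argument is that proof transcribed with $\overline{{\tt Flat}}(R)$ replaced by ${\tt PGF}(R)^\perp$ and Theorem \ref{theo2}(i) replaced by Theorem \ref{theo2}(ii), using completeness of the cotorsion pair $({\tt PGF}(R),{\tt PGF}(R)^\perp)$ for (ii)$\Rightarrow$(iii) and the closure of ${\tt PGF}(R)$ under direct summands and kernels of epimorphisms for (iii)$\Rightarrow$(i). No gaps.
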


\begin{proof} Same as the proof of Corollary 4.4.\end{proof}

\section{Another stability of PGF modules}
 This final section proves that the class of PGF $R$-modules coincides with the class ${\tt PGF}^{(2)}_{\pazocal{I}}(R)$, where we denote by ${\tt PGF}^{(2)}_{\pazocal{I}}(R)$ the class of $R$-modules $M$ which are syzygies in an acyclic complex $\textbf{G}$ of PGF modules, such that $I\otimes_R\textbf{G}$ is acyclic for every injective $R$-module $I$. Also, we denote by ${\tt {SPGF}}^{(2)}_{\pazocal{I}}(R)$ the subcategory of the R-modules $M$ for which there exists a short exact sequence of $R$-modules of the form $0\rightarrow M \rightarrow G \rightarrow M \rightarrow 0$, where $G$ is a PGF, such that $I\otimes_R \_\!\_$ preserves the exactness of this sequence whenever $I$ is an injective $R$-module.

\begin{Proposition}\label{prop44}
	Let $M$ be an $R$-module in the class ${\tt{PGF}}^{(2)}_{\pazocal{I}}(R)$. Then, $\textrm{Tor}_i^R(I,M)=0$ for every $i>0$ and every injective $R$-module $I$.
\end{Proposition}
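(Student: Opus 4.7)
The plan is to exploit the acyclicity of $I \otimes_R \textbf{G}$ together with the Tor-vanishing that already comes from being PGF. Write the witnessing complex as
$$\textbf{G}: \;\;\cdots \to G_1 \to G_0 \to G^0 \to G^1 \to \cdots,$$
with $M = \textrm{Im}(G_0 \to G^0)$, every $G_j, G^j$ a PGF module, and $I \otimes_R \textbf{G}$ acyclic for every injective $R$-module $I$. First I would observe that, by exactness of $\textbf{G}$ at $G_0$, the truncation $\cdots \to G_1 \to G_0 \to M \to 0$ is a resolution of $M$ by PGF modules.

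Next I would invoke Proposition \ref{pgf}(ii)(b) (equivalently, the inclusion ${\tt PGF}(R) \subseteq {\tt GFlat}(R)$ together with \cite[Lemma 2.4]{Bennis}), which guarantees that every PGF module $G$ satisfies $\textrm{Tor}_k^R(I, G) = 0$ for all $k \geq 1$ and every injective $R$-module $I$. Since a resolution by Tor-acyclic modules (with respect to a fixed right module) may be used to compute Tor against that module, this yields
$$\textrm{Tor}_n^R(I, M) \;\cong\; H_n\!\bigl(I \otimes_R (\cdots \to G_1 \to G_0 \to 0)\bigr) \quad \text{for every } n \geq 0.$$

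Finally, for $n \geq 1$ the $n$-th homology of the truncated tensored complex at position $I \otimes_R G_n$ uses exactly the same incoming and outgoing differentials as the $n$-th homology of the full complex $I \otimes_R \textbf{G}$ at that position, and the latter vanishes by the hypothesis that $I \otimes_R \textbf{G}$ is acyclic. Hence $\textrm{Tor}_n^R(I, M) = 0$ for all $n \geq 1$.

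The argument is essentially routine; the one step meriting a bit of care is the justification that a PGF resolution computes Tor. If one prefers to avoid this standard principle, the same conclusion can be reached by direct dimension shifting along the short exact sequences $0 \to K_{j+1} \to G_j \to K_j \to 0$ with $K_0 = M$, together with the observation that the factorization $G_{n+1} \twoheadrightarrow K_{n+1} \hookrightarrow G_n$ and the acyclicity of $I \otimes_R \textbf{G}$ force $I \otimes_R K_n \hookrightarrow I \otimes_R G_{n-1}$, killing $\textrm{Tor}_1^R(I, K_{n-1}) \cong \textrm{Tor}_n^R(I, M)$.
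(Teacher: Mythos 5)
Your proof is correct and takes essentially the same route as the paper's: both exploit the left truncation $\cdots \rightarrow G_1 \rightarrow G_0 \rightarrow M \rightarrow 0$, the vanishing of $\textrm{Tor}_{i>0}^R(I,G)$ for PGF modules $G$, and the acyclicity of $I\otimes_R \textbf{G}$; your packaging via ``acyclic resolutions compute Tor'' and your alternative dimension-shifting argument along $0\rightarrow K_{j+1}\rightarrow G_j \rightarrow K_j\rightarrow 0$ are just two presentations of the paper's induction, which shows $\textrm{Tor}_1^R(I,M)=0$ from the exactness of $0\rightarrow I\otimes_R K\rightarrow I\otimes_R G_0\rightarrow I\otimes_R M\rightarrow 0$ and then shifts via $\textrm{Tor}_{i+1}^R(I,M)\cong \textrm{Tor}_i^R(I,K)$ with $K\in {\tt PGF}^{(2)}_{\pazocal{I}}(R)$.
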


\begin{proof} Let $M\in {\tt PGF}^{(2)}_{\pazocal{I}}(R)$ and $I$ be an injective $R$-module. Then, there exists an exact sequence of PGF $R$-modules $$\cdots \rightarrow G_1 \rightarrow G_0 \rightarrow G^0 \rightarrow G^1 \rightarrow \cdots,$$ where $M\cong \textrm{Im}(G_0 \rightarrow G^0)$ and the functor $J\otimes_R \_\!\_ $ preserves the exactness of this sequence for every injective $R$-module $J$. We consider a short exact sequence of $R$-modules of the form $0\rightarrow K \rightarrow G_0 \rightarrow M \rightarrow 0$, where $K=\textrm{Im}(G_1\rightarrow G_0)\in {\tt PGF}^{(2)}_{\pazocal{I}}(R)$, and obtain the short exact sequence $0\rightarrow I\otimes_R K \rightarrow I\otimes_R G_0 \rightarrow I\otimes_R M \rightarrow 0$. Since the module $G_0$ is PGF, Proposition \ref{pgf}(ii) yields $\textrm{Tor}_i^R(I,G_0)=0$ for every $i>0$. Then, the short exact sequence $0\rightarrow K \rightarrow G_0 \rightarrow M \rightarrow 0$ induces a long exact sequence of the form $$\cdots \rightarrow \textrm{Tor}_1^R(I,G_0) \rightarrow \textrm{Tor}_1^R(I,M)\rightarrow I\otimes_R K \rightarrow I\otimes_R G_0 \rightarrow I\otimes_R M \rightarrow 0, $$ which implies that $\textrm{Tor}_1^R(I,M)=0$. 
	Moreover, the long exact sequence $$\cdots\rightarrow \textrm{Tor}_{i+1}^R(I,G_0)\rightarrow \textrm{Tor}_{i+1}^R(I,M) \rightarrow \textrm{Tor}_{i}^R(I,K)\rightarrow \textrm{Tor}_{i}^R(I,G_0) \rightarrow \cdots,$$ where $i>0$, yields $\textrm{Tor}_{i+1}^R(I,M) = \textrm{Tor}_{i}^R(I,K)$ for every $i>0$. Thus, using induction on $i$ and the fact that $K$ lies in ${\tt PGF}^{(2)}_{\pazocal{I}}(R)$, we obtain that $\textrm{Tor}_i^R(I,M)=0$ for every $i>0$ and every injective $R$-module $I$. \end{proof}
	

The following proposition gives a characterization of the subcategory ${\tt SPGF}^{(2)}_{\pazocal{I}}(R)$.

\begin{Proposition}\label{props2pgfi}
	The following conditions are equivalent for an $R$-module $M$:
	\begin{itemize}
		\item[(i)] $M \in {\tt SPGF}^{(2)}_{\pazocal{I}}(R)$.
		\item[(ii)] There exists a short exact sequence of $R$-modules of the form $0\rightarrow M \rightarrow G \rightarrow M \rightarrow 0$, where G is PGF and $\textrm{Tor}_1^R(I,M)=0$ for every injective $R$-module $I$.
		\item[(iii)] There exists a short exact sequence of $R$-modules of the form $0\rightarrow M \rightarrow G \rightarrow M \rightarrow 0$, where G is PGF and $\textrm{Tor}_i^R(I,M)=0$ for every $i>0$ and every injective $R$-module $I$.
	\end{itemize}
\end{Proposition}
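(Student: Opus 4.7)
My plan is to prove the cycle by verifying (i)$\Rightarrow$(iii)$\Rightarrow$(ii)$\Rightarrow$(i). In each condition a short exact sequence of the form $0\rightarrow M \rightarrow G \rightarrow M \rightarrow 0$ with $G$ PGF is available, so the entire argument will hinge on chasing the long exact sequence obtained by applying $I\otimes_R\_\!\_$, combined with the fact (Proposition \ref{pgf}(ii)(b)) that $\textrm{Tor}_i^R(I,G)=0$ for every $i>0$ and every injective $R$-module $I$.

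For (i)$\Rightarrow$(iii), I would start from the exactness of $0\rightarrow I\otimes_R M \rightarrow I\otimes_R G \rightarrow I\otimes_R M\rightarrow 0$ for every injective $I$. Inserting this into the Tor long exact sequence, together with $\textrm{Tor}_1^R(I,G)=0$, forces $\textrm{Tor}_1^R(I,M)=0$ by considering the segment $\textrm{Tor}_1^R(I,G)\rightarrow \textrm{Tor}_1^R(I,M)\rightarrow I\otimes_R M \rightarrow I\otimes_R G$. For $i\ge 1$, the segment $\textrm{Tor}_{i+1}^R(I,G)\rightarrow \textrm{Tor}_{i+1}^R(I,M)\rightarrow \textrm{Tor}_i^R(I,M)\rightarrow \textrm{Tor}_i^R(I,G)$ collapses to an isomorphism $\textrm{Tor}_{i+1}^R(I,M)\cong \textrm{Tor}_i^R(I,M)$ (a dimension shift). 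Induction starting from the base case $\textrm{Tor}_1^R(I,M)=0$ then delivers the vanishing of all higher Tor groups.

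The implication (iii)$\Rightarrow$(ii) is immediate since the hypothesis on Tor vanishing in (iii) contains that of (ii). For (ii)$\Rightarrow$(i), the assumption $\textrm{Tor}_1^R(I,M)=0$ combined with $\textrm{Tor}_1^R(I,G)=0$ in the Tor long exact sequence yields that $0\rightarrow I\otimes_R M \rightarrow I\otimes_R G \rightarrow I\otimes_R M \rightarrow 0$ is exact for every injective $I$, which is precisely the condition required to place $M$ in ${\tt SPGF}^{(2)}_{\pazocal{I}}(R)$.

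There is no real obstacle here; the argument is a direct bookkeeping exercise with the Tor long exact sequence. The only mildly delicate point is making sure that the particular feature of this kind of sequence, namely having $M$ in both the leftmost and rightmost positions, is exploited in the induction step: the dimension-shift isomorphism $\textrm{Tor}_{i+1}^R(I,M)\cong \textrm{Tor}_i^R(I,M)$ relates $M$ to itself, which is why a single base case propagates to all degrees.
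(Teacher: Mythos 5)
Your proof is correct and follows essentially the same route as the paper: the paper's one-line proof delegates (i)$\Rightarrow$(iii) to Proposition \ref{prop44}, whose own proof is exactly the Tor long-exact-sequence dimension shift you carry out (specialized to the periodic sequence $0\to M\to G\to M\to 0$), and the remaining implications are the same standard bookkeeping. No gaps.
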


\begin{proof}
	This follows immediately from the definition of the class ${\tt SPGF}^{(2)}_{\pazocal{I}}(R)$ and Proposition \ref{prop44}, using standard arguments. 
\end{proof}

\begin{Proposition}\label{final3}
	Every module in ${\tt PGF}^{(2)}_{\pazocal{I}}(R)$ is a direct summand of a module in ${\tt SPGF}^{(2)}_{\pazocal{I}}(R)$.
\end{Proposition}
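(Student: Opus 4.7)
The plan is to perform a classical Eilenberg-style assembly of syzygies: starting from the two-sided totally acyclic complex of PGF modules that witnesses $M \in {\tt PGF}^{(2)}_{\pazocal{I}}(R)$, I would bundle all of its syzygies together into a single module $\widetilde{M}$ that visibly contains $M$ as a direct summand and sits in a short exact sequence of the desired shape.

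Concretely, I would fix a witnessing acyclic complex
$$\textbf{G} = \cdots \rightarrow G_1 \rightarrow G_0 \rightarrow G^0 \rightarrow G^1 \rightarrow \cdots$$
of PGF modules with $M \cong \textrm{Im}(G_0 \rightarrow G^0)$ and $I \otimes_R \textbf{G}$ acyclic for every injective $I$, and re-index it on a single integer parameter with differentials $d_n : G_n \rightarrow G_{n-1}$ for $n \in \mathbb{Z}$, chosen so that the map $G_0 \to G_{-1}$ is the distinguished one with image $M$. Setting $M_n := \textrm{Im}(d_n) = \ker(d_{n-1})$, one has $M_0 = M$, and each $M_n$ is the relevant syzygy of an appropriate shift of $\textbf{G}$, hence again lies in ${\tt PGF}^{(2)}_{\pazocal{I}}(R)$. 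Proposition \ref{prop44} then gives $\textrm{Tor}_i^R(I, M_n) = 0$ for every $i > 0$, every $n \in \mathbb{Z}$, and every injective $R$-module $I$.

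The complex $\textbf{G}$ breaks into short exact sequences $0 \rightarrow M_{n+1} \rightarrow G_n \rightarrow M_n \rightarrow 0$, each of which remains exact after tensoring with an injective $I$ thanks to the vanishing of $\textrm{Tor}_1^R(I, M_n)$. Summing these over $n \in \mathbb{Z}$ and exploiting that tensor commutes with direct sums and that ${\tt PGF}(R)$ is closed under direct sums, I obtain a short exact sequence
$$0 \rightarrow \widetilde{M} \rightarrow G \rightarrow \widetilde{M} \rightarrow 0,$$
where $\widetilde{M} := \bigoplus_{n \in \mathbb{Z}} M_n$ and $G := \bigoplus_{n \in \mathbb{Z}} G_n$ is PGF, and this sequence remains exact after $I \otimes_R \_\!\_$ for every injective $I$. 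Proposition \ref{props2pgfi} therefore places $\widetilde{M}$ in ${\tt SPGF}^{(2)}_{\pazocal{I}}(R)$, and $M = M_0$ is patently a direct summand of $\widetilde{M}$.

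The only genuinely non-formal step is verifying that tensoring with an injective preserves exactness of the assembled sequence, which reduces componentwise to the Tor-vanishing supplied by Proposition \ref{prop44}; closure of ${\tt PGF}(R)$ under direct sums and the compatibility of tensor with direct sums handle the rest, and the indexing bookkeeping (matching $M_0$ with the distinguished image of $\textbf{G}$) is routine.
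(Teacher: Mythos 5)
Your proof is correct and follows essentially the same route as the paper: the paper forms the direct sum $\bigoplus_{n\in\mathbb{Z}}\Sigma^{n}\textbf{G}$ of all shifts of the witnessing complex and takes the image of its differential, which is exactly your $\widetilde{M}=\bigoplus_{n}M_{n}$ sitting in $0\rightarrow\widetilde{M}\rightarrow\bigoplus_{i}G_{i}\rightarrow\widetilde{M}\rightarrow 0$. The only cosmetic difference is that you verify exactness after tensoring componentwise via the Tor-vanishing of Proposition \ref{prop44}, whereas the paper deduces it directly from the acyclicity of $I\otimes_R(\bigoplus_{n}\Sigma^{n}\textbf{G})\cong\bigoplus_{n}(I\otimes_R\Sigma^{n}\textbf{G})$; both are valid.
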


\begin{proof}
	Let $M$ be an $R$-module in ${\tt PGF}^{(2)}_{\pazocal{I}}(R)$. Then, there exists an exact sequence of PGF $R$-modules 
	$$ \textbf{G}= \cdots \rightarrow G_1 \xrightarrow{d_1^G} G_0 \xrightarrow{d_0^G} G_{-1} \xrightarrow{d_{-1}^G} G_{-2} \rightarrow \cdots,$$
	such that $M\cong \textrm{Im}(d_0^G)$ and such that the sequence $I\otimes_R \textbf{G}$ is exact for every injective module $I$. For every $n \in \mathbb{Z}$, we denote by $\Sigma^n\textbf{G}$ the exact sequence obtained from $\textbf{G}$ by increasing all indices by n: $(\Sigma^n\textbf{G})_i=G_{i-n}$ and $d_i^{\Sigma^n G}=d_{i-n}^G$ for every $i\in \mathbb{Z}$. Consider now the exact sequence
	$$\bigoplus_{n \in \mathbb{Z}}(\Sigma^n\textbf{G})= \cdots \rightarrow \bigoplus_{i \in \mathbb{Z}}G_i \xrightarrow{\bigoplus_{i \in \mathbb{Z}}d_i^G}\bigoplus_{i \in \mathbb{Z}}G_i \xrightarrow{\bigoplus_{i \in \mathbb{Z}}d_i^G}\bigoplus_{i \in \mathbb{Z}}G_i \rightarrow \cdots.$$ Since ${\tt PGF}(R)$ is closed under direct sums, we obtain that the module $\bigoplus_{i \in \mathbb{Z}}G_i$ is also PGF. Moreover, \cite[Proposition 20.2(3)]{AF} yields the isomorphism of complexes 
	$I\otimes_R (\bigoplus_{n \in \mathbb{Z}}(\Sigma^n\textbf{G})) \cong \bigoplus_{n \in \mathbb{Z}}(I\otimes_R \Sigma^n\textbf{G})$ which is an exact sequence for every injective module $I$. Thus, $\textrm{Im}(\bigoplus_{i \in \mathbb{Z}}d_i^G)$ lies in ${\tt SPGF}^{(2)}_{\pazocal{I}}(R)$, as needed. \end{proof}

\begin{Definition}
	Let $M$ be a module in ${\tt SPGF}^{(2)}_{\pazocal{I}}(R)$. We say that a module N is an $M$-type if there exists a short exact sequence of $R$-modules of the form $0\rightarrow M \rightarrow N \rightarrow G \rightarrow 0,$ where $G$ is PGF. 
\end{Definition}

\begin{Proposition}\label{final2}
	Let $M$ be an $R$-module in ${\tt SPGF}^{(2)}_{\pazocal{I}}(R)$ and $N$ be an $M$-type module. Then, the following hold:
	\begin{itemize}
		\item[(i)] $\textrm{Tor}_i^R(I,N)=0$ for every $i>0$ and every injective $R$-module $I$.
		\item[(ii)] There exists an exact sequence of $R$-modules of the form $0\rightarrow N \rightarrow P \rightarrow K \rightarrow 0$, where $P$ is projective, $K$ is an $M$-type module and the functor $I\otimes_R \_\!\_$ preserves the exactness of this sequence for every injective $R$-module $I$.
	\end{itemize}
	
\end{Proposition}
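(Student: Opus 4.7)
The plan for (i) is to apply the long exact sequence of $\textrm{Tor}$ to the defining short exact sequence of the $M$-type module $N$, namely $0\to M\to N\to G\to 0$ with $G$ PGF. Proposition \ref{props2pgfi} (applied to $M\in{\tt SPGF}^{(2)}_{\pazocal{I}}(R)$) yields $\textrm{Tor}_i^R(I,M)=0$ for every $i>0$ and every injective $R$-module $I$, while Proposition \ref{pgf}(ii)(b) (applied to $G$) yields $\textrm{Tor}_i^R(I,G)=0$ in the same range. The long exact sequence then immediately forces $\textrm{Tor}_i^R(I,N)=0$ for every $i>0$, proving (i).

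For (ii), the strategy is to first embed $N$ into a PGF module via a push-out, and then embed that PGF module into a projective using the very definition of PGF. Concretely, I would take the defining sequence $0\to M\xrightarrow{i} G'\to M\to 0$ of $M\in{\tt SPGF}^{(2)}_{\pazocal{I}}(R)$ (with $G'$ PGF) together with the sequence $0\to M\xrightarrow{j} N\to G\to 0$ exhibiting $N$ as an $M$-type module, and form the push-out $Q$ of $i$ and $j$. Standard properties of push-outs along monomorphisms produce short exact sequences $0\to N\to Q\to M\to 0$ and $0\to G'\to Q\to G\to 0$. Since the class of PGF modules is closed under extensions, the second sequence shows that $Q$ is PGF.

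Next, Proposition \ref{pgf}(iii) applied to $Q$ provides a short exact sequence $0\to Q\to P\to Q'\to 0$ with $P$ projective and $Q'$ PGF. Composing the inclusions $N\hookrightarrow Q\hookrightarrow P$ yields a short exact sequence $0\to N\to P\to K\to 0$, and a routine diagram chase along the chain $N\subseteq Q\subseteq P$ identifies $K$ as an extension $0\to Q/N\to K\to P/Q\to 0$, that is $0\to M\to K\to Q'\to 0$, so that $K$ is itself an $M$-type module. Part (i) applied to $K$ now gives $\textrm{Tor}_1^R(I,K)=0$ for every injective $I$, which is exactly the condition ensuring that $0\to I\otimes_R N\to I\otimes_R P\to I\otimes_R K\to 0$ is exact, as required. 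I do not anticipate any substantial obstacle; the only mildly delicate point is the bookkeeping verifying the two short exact sequences emerging from the push-out, which is a standard $3\times 3$ computation.
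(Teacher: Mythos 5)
Your proposal is correct and follows essentially the same route as the paper: the same long exact Tor sequence for (i), and for (ii) the same push-out of the two maps out of $M$ to produce a PGF module containing $N$, followed by embedding that module into a projective and identifying the cokernel $K$ as an $M$-type module so that part (i) gives the required Tor-vanishing.
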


\begin{proof}
	(i) Let $I$ be an injective $R$-module. Since $N$ is an $M$-type, there exists a short exact sequence of $R$-modules of the form $0\rightarrow M \rightarrow N \rightarrow G \rightarrow 0$, where $G$ is PGF. The short exact sequence above induces a long exact sequence of the form $$\cdots \rightarrow \textrm{Tor}_{i}^R(I,M) \rightarrow \textrm{Tor}_{i}^R(I,N) \rightarrow \textrm{Tor}_{i}^R(I,G) \rightarrow \cdots,$$ where $i>0$. Since $M\in {\tt SPGF}^{(2)}_{\pazocal{I}}(R)$, by Proposition \ref{props2pgfi}(iii), we have $\textrm{Tor}_{i}^R(I,M)=0$ for every $i>0$. Moreover, Proposition \ref{pgf}(ii) yields $\textrm{Tor}_{i}^R(I,G)=0$ for every $i>0$. We conclude that $\textrm{Tor}_{i}^R(I,N)=0$ for every $i>0$ and every injective $R$-module $I$.
	
	(ii) Since $M\in {\tt SPGF}^{(2)}_{\pazocal{I}}(R)$, there exists a short exact sequence of $R$-modules of the form $0\rightarrow M \rightarrow G' \rightarrow M \rightarrow 0$, where $G'$ is PGF. Since $N$ is an $M$-type, there exists also a short exact sequence of $R$-modules of the form $0\rightarrow M \rightarrow N \rightarrow G \rightarrow 0$, where $G$ is PGF. Consider the pushout diagram of the above short exact sequences:
	\[
	\begin{array}{ccccccccc}
		& & 0 & & 0 & & & &\\
		& & \downarrow & & \downarrow & & & &\\
		0&\rightarrow & M &\rightarrow & G' &\rightarrow & M & \rightarrow &0\\
		& & \downarrow & & \downarrow & & \parallel & &\\
		0&\rightarrow & N &\rightarrow & F &\rightarrow & M & \rightarrow &0\\
		& & \downarrow & & \downarrow & & & &\\
		& & G & = & G & & & &\\
		& & \downarrow & & \downarrow & & & &\\
		& & 0 & & 0 & & & &
	\end{array}
	\]
	\noindent Since ${\tt PGF}(R)$ is closed under extensions, the short exact sequence $0\rightarrow G' \rightarrow F \rightarrow G \rightarrow 0$ implies that the module $F$ is also PGF. Thus, there exists a short exact sequence of $R$-modules of the form $0\rightarrow F \rightarrow P \rightarrow F' \rightarrow 0$, where $P$ is projective and $F'$ is PGF. Consider now the following pushout diagram:
	\[
	\begin{array}{ccccccccc}
		& & & & 0 & & 0 & &\\
		& & & & \downarrow & & \downarrow & &\\
		0&\rightarrow & N &\rightarrow & F &\rightarrow & M & \rightarrow &0\\
		& & \parallel & & \downarrow & & \downarrow & &\\
		0&\rightarrow & N &\rightarrow & P &\rightarrow & K & \rightarrow &0\\
		& & & & \downarrow & & \downarrow & &\\
		& & & & F' & = & F' & &\\
		& & & & \downarrow & & \downarrow & &\\
		& & & & 0 & & 0 & &
	\end{array}
	\]
	\noindent Since $F'$ is PGF, the module $K$ is an $M$-type. Moreover, (i) implies $\textrm{Tor}_1^R(I,K)=0$ for every injective $R$-module $I$ and hence the sequence $0\rightarrow I\otimes_R N \rightarrow I\otimes_R P \rightarrow I\otimes_R K\rightarrow 0$ is exact for every injective $R$-module $I$.\end{proof}

\begin{Corollary}\label{corfinal}
	Let $M$ be an $R$-module in ${\tt SPGF}^{(2)}_{\pazocal{I}}(R)$ and $N$ be an $M$-type module. Then, $N$ is PGF.
\end{Corollary}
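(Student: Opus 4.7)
The plan is to verify conditions (a) and (b) of Proposition \ref{pgf}(ii) for the module $N$, after which PGF-ness follows. Condition (b), i.e.\ $\textrm{Tor}_i^R(I,N)=0$ for every $i>0$ and every injective $R$-module $I$, is already delivered by Proposition \ref{final2}(i). So the whole task is to construct a coproper right projective resolution of $N$, that is, an exact sequence
\[ 0 \to N \to P^0 \to P^1 \to P^2 \to \cdots \]
with each $P^i$ projective, such that $I\otimes_R\_\!\_$ preserves its exactness for every injective $R$-module $I$.

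The key observation is that Proposition \ref{final2}(ii) is perfectly suited for an iterative construction because it produces a short exact sequence of the same formal type (ending in an $M$-type module) starting from an $M$-type module. Concretely, I would set $N^{(0)}:=N$, apply Proposition \ref{final2}(ii) to obtain a short exact sequence
\[ 0 \to N^{(0)} \to P^0 \to N^{(1)} \to 0, \]
with $P^0$ projective, $N^{(1)}$ an $M$-type module, and $I\otimes_R\_\!\_$ preserving its exactness for every injective $I$. Since $N^{(1)}$ is again an $M$-type module, I repeat the procedure to obtain
\[ 0 \to N^{(1)} \to P^1 \to N^{(2)} \to 0 \]
with the same properties, and inductively construct short exact sequences $0\to N^{(n)}\to P^n\to N^{(n+1)}\to 0$ for all $n\geq 0$.

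Splicing these short exact sequences together yields an exact sequence $0\to N\to P^0\to P^1\to P^2\to\cdots$ of $R$-modules with each $P^n$ projective. The fact that $I\otimes_R\_\!\_$ preserves exactness of the spliced complex is immediate: for every injective $R$-module $I$ each individual short exact sequence stays exact after applying $I\otimes_R\_\!\_$ (by Proposition \ref{final2}(ii)), and splicing exact sequences of abelian groups along a common term preserves exactness. This establishes condition (a) of Proposition \ref{pgf}(ii) for $N$. Combined with condition (b), which holds by Proposition \ref{final2}(i), Proposition \ref{pgf} lets us conclude that $N$ is PGF.

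I do not anticipate a genuine obstacle here: the entire argument is a bookkeeping exercise once Proposition \ref{final2} is available, since it is designed precisely to be iterated. The only mild point to keep in mind is that exactness must be checked \emph{after} tensoring with every injective $I$, but this is exactly what Proposition \ref{final2}(ii) guarantees at each step, so the splicing argument goes through uniformly.
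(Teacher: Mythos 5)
Your proposal is correct and is essentially identical to the paper's own proof: both iterate Proposition \ref{final2}(ii) to build the coresolution $0\to N\to P^0\to P^1\to\cdots$ preserved by $I\otimes_R\_\!\_$, invoke Proposition \ref{final2}(i) for the Tor-vanishing, and conclude via Proposition \ref{pgf}(ii).
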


\begin{proof}
	Since $N$ is an $M$-type module, Proposition \ref{final2}(ii) implies that there exists a short exact sequence of $R$-modules of the form $0\rightarrow N \rightarrow P^0 \rightarrow K \rightarrow 0$, where $P^0$ is projective, $K$ is an $M$-type and the functor $I\otimes_R \_\!\_$ preserves the exactness of this sequence for every injective $R$-module $I$. The iteration of this process yields an exact sequence of $R$-modules of the form $0\rightarrow N \rightarrow P^0 \rightarrow P^1 \rightarrow P^2 \rightarrow \cdots,$ where $P^i$ is projective for every $i\geq 0$ and the functor $I\otimes_R \_\!\_$ preserves the exactness of this sequence for every injective $R$-module $I$. Using Proposition \ref{final2}(i), we also have $\textrm{Tor}_{i}^R(I,N)=0$ for every $i>0$ and every injective $R$-module $I$. Invoking Proposition \ref{pgf}(ii), we conclude that $N$ is PGF.
\end{proof}

\begin{Theorem} \label{finale}
	${\tt PGF}(R)= {\tt {PGF}}^{(2)}_{\pazocal{I}}(R).$
\end{Theorem}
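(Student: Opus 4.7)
\medskip

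The plan is to prove the two inclusions separately, with the nontrivial one being a direct assembly of the results established in this section. The inclusion $\mathtt{PGF}(R) \subseteq \mathtt{PGF}^{(2)}_{\pazocal{I}}(R)$ is immediate from the definitions: if $M$ is PGF, then $M$ is a syzygy of an acyclic complex $\textbf{P}$ of projective modules, hence of PGF modules, that remains acyclic after tensoring by every injective, so $M \in \mathtt{PGF}^{(2)}_{\pazocal{I}}(R)$.

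For the reverse inclusion $\mathtt{PGF}^{(2)}_{\pazocal{I}}(R) \subseteq \mathtt{PGF}(R)$, let $M \in \mathtt{PGF}^{(2)}_{\pazocal{I}}(R)$. By Proposition \ref{final3}, $M$ is a direct summand of some $R$-module $N \in \mathtt{SPGF}^{(2)}_{\pazocal{I}}(R)$. Since the class $\mathtt{PGF}(R)$ is closed under direct summands, it suffices to show that $N$ itself is PGF. The key observation is that $N$ trivially qualifies as an $N$-type module: the identity gives rise to the short exact sequence
\[ 0 \rightarrow N \rightarrow N \rightarrow 0 \rightarrow 0, \]
and the zero module is PGF. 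Hence Corollary \ref{corfinal}, applied with the roles of both $M$ and $N$ in that statement played by our $N$, yields that $N$ is PGF, and therefore so is its direct summand $M$.

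The only step that warrants a brief sanity check is the application of Corollary \ref{corfinal}, since it relies on the constructions in Propositions \ref{props2pgfi} and \ref{final2}, which are designed for a nontrivial $M$-type. These go through unchanged when $G = 0$: the pushouts in the proof of Proposition \ref{final2}(ii) degenerate but remain short exact sequences, the modules $F$ and $F'$ constructed there are still PGF (using closure of $\mathtt{PGF}(R)$ under extensions and the characterization in Proposition \ref{pgf}(iii)), and the resulting coresolution of $N$ by projectives stays tensor-exact against injectives. Thus no genuine obstacle arises, and the whole argument is essentially the assembly $\text{Prop.}~\ref{final3} + \text{Cor.}~\ref{corfinal} + \text{closure under direct summands}$.
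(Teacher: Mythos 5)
Your proposal is correct and follows essentially the same route as the paper: reduce to ${\tt SPGF}^{(2)}_{\pazocal{I}}(R)$ via Proposition \ref{final3} and closure of ${\tt PGF}(R)$ under direct summands, observe that a module $N$ in that class is an $N$-type via the sequence $0\rightarrow N\rightarrow N\rightarrow 0\rightarrow 0$, and conclude with Corollary \ref{corfinal}. The extra sanity check of the degenerate case $G=0$ is a reasonable addition but not needed beyond what the paper already does.
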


\begin{proof}
	It suffices to prove that ${\tt PGF}^{(2)}_{\pazocal{I}}(R)\subseteq {\tt PGF}(R)$. Since the class ${\tt PGF}(R)$ is closed under direct summands, by Proposition \ref{final3} it suffices to prove that ${\tt SPGF}^{(2)}_{\pazocal{I}}(R)\subseteq {\tt PGF}(R)$. Let $M$ be an $R$-module in ${\tt SPGF}^{(2)}_{\pazocal{I}}(R)$. Letting $G=0$ in Definition 5.4, it follows that $M$ is an $M$-type. Thus, Corollary 5.6 implies that $M$ is PGF. \end{proof}
\begin{Remark}\rm (i) We denote by ${\tt PGF}^{(2)}(R)$ the subcategory of the R-modules $M$ which are syzygies in an acyclic complex $\textbf{G}$ of PGF modules, such that $H\otimes_R\textbf{G}$ is acyclic for every Gorenstein injective $R$-module $H$. Then we have the inclusions ${\tt {PGF}}(R)\subseteq {\tt {PGF}}^{(2)}(R) \subseteq {\tt {PGF}}^{(2)}_{\pazocal{I}}(R)$ and hence Theorem 5.7 yields ${\tt PGF}(R)= {\tt {PGF}}^{(2)}(R)={\tt {PGF}}^{(2)}_{\pazocal{I}}(R)$.
	
(ii)The stability result in this section may be transfered to the relative case under the assumption that $({\tt PGF}_{\mathcal{B}}(R),{\tt PGF}_{\mathcal{B}}^\perp (R))$ is a complete hereditary cotorsion pair.	More precicely we have ${\tt PGF}_{\mathcal{B}}(R)= {\tt {PGF}}^{(2)}_{\mathcal{B}}(R)$, where we denote by ${\tt {PGF}}^{(2)}_{\mathcal{B}}(R)$ the class of R-modules $M$ which are syzygies in an acyclic complex $\textbf{G}$ of modules in ${\tt PGF}_{\mathcal{B}}(R)$, such that $B\otimes_R\textbf{G}$ is acyclic for every $R$-module $B\in \mathcal{B}$.
	
(iii) An application of Theorem 5.7 may be found in \cite[Proposition 2.15]{St}.
\end{Remark}
\section*{Acknowledgements}The authors wish to thank Ioannis Emmanouil, Jan Saroch and the anonymous referee for useful comments.

\bigskip
{\small {\sc Department of Mathematics,
             University of Athens,
             Athens 15784,
             Greece}}

{\em E-mail address:} {\tt kaperonn@math.uoa.gr}

\bigskip
{\small {\sc Department of Mathematics,
		University of Athens,
		Athens 15784,
		Greece}}
	
{\em E-mail address:} {\tt dstergiop@math.uoa.gr}

\end{document}